\newtheorem{theorem}{Theorem}[section]
\newtheorem{definition}[theorem]{Definition}
\newtheorem{cons}{Consequence}[section]
\newtheorem{ass}{Assumptions}[section]
\begin{document}
%%-----------------------------
%%      the top matter
%%-----------------------------
\title{Existence and uniqueness of mild solutions for BGK models for gas mixtures of polyatomic molecules}%\thanks{...}\thanks{...}% At most 5 thanks
\author{Marlies Pirner}%\address{University of W\"urzburg, 
%  Emil-Fischer-Str. 40,97074 W\"urzburg, Germany}
%\author{...}\address{...}
%\author{...}\address{...}
%
\date{}
\maketitle
\begin{abstract} We consider two models for a two component gas mixture with translational and internal energy degrees of freedom described by a BGK approximation assuming that the number of particles of each species remains constant. The two species are allowed to have different degrees of freedom in internal energy and are modelled by a system of kinetic BGK equations featuring two interaction terms to account for momentum and energy transfer between the species. We consider the two models in the literature \cite{Pirner5} and \cite{Pirner9} and prove that the models satisfy a different equation of state than the ideal gas law. Moreover, we prove that for these models there exist a unique positive solution.  \end{abstract}
%
%\begin{resume} ... \end{resume}
%

\textbf{AMS subject classification:} 35A01, 35A02, 35B09, 82C40
\\

\textbf{keywords:} multi-fluid mixture, kinetic model, BGK approximation, polyatomic molecules, existence, uniqueness, positivity
%

%%-----------------------------
%%      your text
%%-----------------------------
\section{Introduction}
 
 In this paper we shall concern ourselves with a kinetic description of gas mixtures for polyatomic molecules. In the case of mono atomic molecules and two species this is traditionally done via the Boltzmann equation for the density distributions $f_1$ and $f_2$, see for example \cite{Cercignani, Cercignani_1975}. Under certain assumptions the complicated interaction terms of the Boltzmann equation can be simplified by a so called BGK approximation, consisting of a collision frequency multiplied by the deviation of the distributions from local Maxwellians. This approximation should be constructed in a way such that it  has the same main properties of the Boltzmann equation namely conservation of mass, momentum and energy, further it should have an H-theorem with its entropy inequality and the equilibrium must still be Maxwellian.  BGK  models give rise to efficient numerical computations, which are asymptotic preserving, that is they remain efficient even approaching the hydrodynamic regime \cite{Puppo_2007, Jin_2010,Dimarco_2014, Bennoune_2008,  Bernard_2015, Crestetto_2012}. Evolution of a polyatomic gas is very important in applications, for instance air consists of a gas mixture of polyatomic molecules. But, most kinetic models modelling air deal with the case of a mono atomic  gas consisting of only one species. %Moreover, in the case of polyatomic molecules one expects a different equation of state than the ideal gas law.
 
  In the literature one can find two types of single species models for polyatomic molecules. There are models which contain a sum of collision terms on the right-hand side corresponding to the elastic and inelastic collisions. Examples are the models of Rykov \cite{Rykov}, Holway \cite{Holway} and Morse \cite{Morse}. The other type of models contain only one collision term on the right-hand side taking into account both elastic and inelastic interactions. Examples for this are Bernard, Iollo, Puppo \cite{Bernard}, the model of Andries, Le Tallec, Perlat, Perthame \cite{Perthame} or the model by Bisi and Caceres \cite{Bisi} modelling chemical interactions. Furthermore, for gas mixtures there are  models \cite{Pirner5} and \cite{Pirner9}  extending \cite{Bernard}, where a gas mixture of polyatomic molecules is considered. The models in \cite{Pirner5} and \cite{Pirner9} allow the two species to have different degrees of freedom in internal energy. 
  
 In this paper, we consider  the models described in \cite{Pirner5} and \cite{Pirner9} and prove that we are able to derive a more generalized equation of state. This is an important issue for example when you want to describe atmospheric re-entry problems, see \cite{Luc}.

For the models in \cite{Pirner5} and \cite{Pirner9} conservation properties and an H-Theorem are proven. For a discussion of the physcial relevance of the two models, see \cite{Pirner9}. In a polyatomic gas, one has two different types of relaxation processes, one has relaxation of the distribution function to a Maxwell distribution and relaxation of the translational temperature and the temperature  related to rotations and vibrations to a common value due to equipartition of the energy in equilibrium. The model \cite{Pirner5} describes physcial situations in which the speed of relaxation of the translational and the rotational/vibrational velocity is fast compared to the ralaxation towards Maxwell distributions, whereas the model in \cite{Pirner9} covers both regimes, fast and slow relaxation. For details, see \cite{Pirner9}.  The models presented here are an extended version of \cite{Bernard} and attempt to model the two different relaxation procedures in a polyatomic gas in a different more intuitive way as it is done in \cite{Perthame}, since the two relaxation procedures are described separately. In addition, the model in \cite{Pirner9} covers fast and slow relaxation of the temperatures whereas the model in \cite{Perthame} only covers the slow relaxation of the temperatures. But since the models in \cite{Pirner5}, \cite{Pirner9} and \cite{Bernard} deal with a system of coupled equations, it is not obvious that we still have existence, uniqueness and positivity of solutions.  In this paper, we want to prove that if we use this different way of modelling the relaxation processes, we still have existence, uniqueness and positivity of solutions.

 Our aim is to prove existence, uniqueness and positivity of mild solutions of the models presented in \cite{Pirner5}, \cite{Pirner9} and \cite{Bernard}.
This work is motivated by \cite{PerthamePulvirenti}  where the global existence of mild solutions of the BGK equation for one species was established,  \cite{Yun} where global existence of mild solutions of the ES-BGK for one species is shown and \cite{Pirner3} where global existence of mild solutions of BGK models for gas mixtures is shown. There is also an existence result concerning the Boltzmann equation for mixtures in  \cite{Yun2007}. The existence and uniqueness of the model \cite{Perthame} is proven in \cite{Yunexpoly}.

The outline of the paper is as follows: in section \ref{sec1} we will present the models developed in \cite{Pirner5} and \cite{Pirner9}. In section \ref{sec3} we want to derive the macroscopic equations of this model in equilibrium in order to prove that these models produce a more general equation of state than the ideal gas law. In section \ref{sec4}, we will prove existence, uniqueness and positivity of solutions of the two models.

%\newpage
\section{The BGK models for a gas mixture of polyatomic molecules}
\label{sec1}
We want  to repeat the BGK models for two species of polyatomic molecules presented in \cite{Pirner5} and \cite{Pirner9} for the convenience of the reader. For more details and motivation for the choice of the model see \cite{Pirner5} and \cite{Pirner9}. The two models are very similar. They differ only in one equation, so we introduce the two models at the same time and indicate the difference with the label $(a)$ and $(b)$.
For simplicity in the following we consider a mixture composed of two different species. Let $x\in \mathbb{R}^d$ and $v\in \mathbb{R}^d, d \in \mathbb{N}$ be the phase space variables  and $t\geq 0$ the time. Let $M$ be the total number of different rotational and vibrational degrees of freedom and $l_k$ the number of internal degrees of freedom of species $k$, $k=1,2$. Note that the sum $l_1+l_2$ is not necessarily equal to $M$, because $M$ counts only the different degrees of freedom in the internal energy, $l_1+l_2$ counts all degrees of freedom in the internal energy. For example, consider two species consisting of diatomic molecules which have two rotational  degrees of freedom. In addition, the second species has one vibrational degree of freedom. Then we have $M=3, l_1=2, l_2=3$. Further, $\eta \in \mathbb{R}^{M}$  is the variable for the internal energy degrees of freedom, $\eta_{l_k} \in \mathbb{R}^{M}$ coincides with $\eta$ in the components corresponding to the internal degrees of freedom of species $k$ and is zero in the other components.  \\ Since we want to describe two different species, our kinetic model has two distribution functions $f_1(x,v,\eta_{l_1},t)> 0$ and $f_2(x,v,\eta_{l_2},t) > 0$. 
 Furthermore, for any $f_1,f_2: \Lambda_{poly} \times \mathbb{R}^d \times \mathbb{R}^M \times \mathbb{R}^+_0, \Lambda_{poly} \subset \mathbb{R}^d$ with $(1+|v|^2 + |\eta_{l_k}|^2)f_k \in L^1(dv d\eta_{l_k})$, $f_1,f_2 \geq 0,$  we relate the distribution functions to  macroscopic quantities by mean-values of $f_k$, $k=1,2$ as follows
\begin{align}
\int \int f_k(v, \eta_{l_k}) \begin{pmatrix}
1 \\ v \\ \eta_{l_k} \\ m_k |v-u_k|^2 \\ m_k |\eta_{l_k} - \bar{\eta}_{l_k} |^2 \\ m_k (v-u_k(x,t)) \otimes (v-u_k(x,t))
\end{pmatrix} 
dv d\eta_{l_k}=: \begin{pmatrix}
n_k \\ n_k u_k \\ n_k \bar{\eta}_{l_k} \\ d n_k T_k^{t} \\ l_k n_k T_k^{r} \\ \mathbb{P}_k
\end{pmatrix} , 
\label{moments}
\end{align} 
for $k=1,2$, where $n_k$ is the number density, $u_k$ the mean velocity, $\bar{\eta}_{l_k}$ the mean variable related to the internal energy, $T_k^{t}$ the mean temperature of the translation, $T_k^{r}$ the mean temperature of the internal energy degrees of freedom for example rotation or vibration and $\mathbb{P}_k$ the pressure tensor of species $k$, $k=1,2$. Note that in this paper we shall write $T_k^{t}$ and $T_k^{r}$ instead of $k_B T_k^{t}$ and $k_B T_k^{r}$, where $k_B$ is Boltzmann's constant.
In the following, we always keep the term $\bar{\eta}_{l_k}$ in order to cover the most general case, but in \cite{Bernard} and \cite{Pirner5}, they require $\bar{\eta}_{l_k}=0$, which means that the energy in rotations clockwise is the same as in rotations counter clockwise. Similar for vibrations. In addition, in the next section we will show that if one requires $\bar{\eta}_{l_k} = \omega_k$ with a fixed $\omega_k \in \mathbb{R}^M$ such that $|\omega_k|^2 = 2 \frac{p_{\infty}}{m_k n_k}$, leads to a more general equation of state in equilibrium given by $p_k= n_k T_k + const.$ %In  the following, we will require $\bar{\eta}_k=0$, which means that the energy in rotations clockwise is the same as in rotations counter clockwise. Similar for vibrations.

We consider the model presented in \cite{Pirner5} given by
\begin{align} \begin{split} \label{BGK}
\partial_t f_1 + v \cdot \nabla_x  f_1   &= \nu_{11} n_1 (M_1 - f_1) + \nu_{12} n_2 (M_{12}- f_1),
\\ 
\partial_t f_2 + v \cdot \nabla_x  f_2 &=\nu_{22} n_2 (M_2 - f_2) + \nu_{21} n_1 (M_{21}- f_2), \\
f_1(t=0) &= f_1^0, \\
f_2(t=0) &= f_2^0
\end{split}
\end{align}
with the Maxwell distributions
\begin{align} 
\begin{split}
M_k(x,v,\eta_{l_k},t) &= \frac{n_k}{\sqrt{2 \pi \frac{\Lambda_k}{m_k}}^d } \frac{1}{\sqrt{2 \pi \frac{\Theta_k}{m_k}}^{l_k}} \exp({- \frac{|v-u_k|^2}{2 \frac{\Lambda_k}{m_k}}}- \frac{|\eta_{l_k}- \bar{\eta}_{l_k}|^2}{2 \frac{\Theta_k}{m_k}}), 
\\
%M_2(x,v,\eta_{l_2},t) &= \frac{n_2}{\sqrt{2 \pi \frac{\Lambda_2}{m_2}}^d } \frac{1}{\sqrt{2 \pi \frac{\Theta_2}{m_2}}^{l_2}} \exp({- \frac{|v-u_2|^2}{2 \frac{\Lambda_2}{m_2}}}- \frac{|\eta_{l_2}|^2}{2 \frac{\Theta_2}{m_2}})
%\\
M_{kj}(x,v,\eta_{l_k},t) &= \frac{n_{kj}}{\sqrt{2 \pi \frac{\Lambda_{kj}}{m_k}}^d } \frac{1}{\sqrt{2 \pi \frac{\Theta_{kj}}{m_k}}^{l_k}} \exp({- \frac{|v-u_{kj}|^2}{2 \frac{\Lambda_{kj}}{m_k}}}- \frac{|\eta_{l_k}- \bar{\eta}_{l_{kj}}|^2}{2 \frac{\Theta_{kj}}{m_k}}), 
%\\
%M_{21}(x,v,\eta_{l_2},t) &= \frac{n_{21}}{\sqrt{2 \pi \frac{\Lambda_{21}}{m_2}}^d } \frac{1}{\sqrt{2 \pi \frac{\Theta_{21}}{m_2}}^{l_2}} \exp({- \frac{|v-u_{21}|^2}{2 \frac{\Lambda_{21}}{m_2}}}- \frac{|\eta_{l_2}- \bar{\eta}_{l_{kj}}|^2}{2 \frac{\Theta_{21}}{m_2}})
\end{split}
\label{BGKmix}
\end{align}
for $ j,k =1,2, j \neq k$, 
where $\nu_{11} n_1$ and $\nu_{22} n_2$ are the collision frequencies of the particles of each species with itself, while $\nu_{12} n_2$ and $\nu_{21} n_1$ are related to interspecies collisions. 
To be flexible in choosing the relationship between the collision frequencies, we now assume the relationship
\begin{equation} 
\nu_{12}=\varepsilon \nu_{21}, \quad 0 < \frac{l_1}{l_1+l_2}\varepsilon \leq 1.
\label{coll}
\end{equation}
The restriction $\frac{l_1}{l_1+l_2} \varepsilon \leq 1$ is without loss of generality.
If $\frac{l_1}{l_1+l_2}\varepsilon >1$, exchange the notation $1$ and $2$ and choose $\frac{1}{\varepsilon}.$ In addition, we assume that all collision frequencies are positive. For the existence and uniqueness proof we assume the following restrictions on our collision frequencies
\begin{align}
\nu_{jk}(x,t) n_k(x,t) = \widetilde{\nu}_{jk} \frac{n_k(x,t)}{n_1(x,t) + n_2(x,t)}, ~ j,k =1,2
\label{asscoll}
\end{align}
with constants $\widetilde{\nu}_{11}, \widetilde{\nu}_{12}, \widetilde{\nu}_{21}, \widetilde{\nu}_{22}$.
We couple these kinetic equations with an algebraic equation for conservation of internal energy 
\begin{align}
\frac{d}{2} n_k \Lambda_k = \frac{d}{2} n_k T_k^{t} +\frac{l_k}{2} n_k T_k^{r} - \frac{l_k}{2} n_k \Theta_k, \quad k=1,2. \label{internal}
\end{align}  
and a relaxation equation ensuring that the two temperatures $\Lambda_k$ and $\Theta_k$ relax to the same value in equilibrium
%\begin{align}
%\partial_t M_k + v \cdot \nabla_x M_k = \frac{\nu_{kk} n_k}{Z_r^k} \frac{d+l_k}{d} (\tilde{M}_k - M_k), \quad k=1,2
%\label{kin_Temp}
%\end{align}
%\textcolor{cyan}{New version: \begin{align}
% \partial_t M_k + v \cdot \nabla_x M_k = \frac{\nu_{kk} n_k}{Z_r^k} \frac{d+l_k}{d} (\tilde{M}_k - M_k)+ \frac{\nu_{kj} n_j}{Z_r^k} \frac{d+l_k}{d} (\tilde{M}_{kj} - M_k)+ \nu_{kj} n_j (M_{kj} - M_k) , \quad k=1,2 \end{align}
% }
% \textcolor{red}{
 \begin{subequations}
 \begin{equation}
 \begin{split}
 \partial_t M_k + v \cdot \nabla_x M_k = \frac{\nu_{kk} n_k}{Z_r^k} \frac{d+l_k}{d} (\widetilde{M}_k - M_k)&+ \nu_{kk} n_k (M_k -f_k) \\&+ \nu_{kj} n_j (M_{kj} - f_k) , \\
 \Theta_k(0)= \Theta_k^0
 \label{kin_Temp} 
 \end{split}
 \end{equation}
 \begin{equation}
 \begin{split}
 \partial_t M_k + v \cdot \nabla_x M_k = \frac{\nu_{kk} n_k}{Z_r^k} \frac{d+l_k}{d} (\widetilde{M}_k - M_k)&+ \nu_{kj} n_j (\widetilde{M}_{kj} - M_k) , \\
 \Theta_k(0)= \Theta_k^0 
 \label{kin_Temp3}
 \end{split}
 \end{equation}
\end{subequations}
 %}
for $j,k=1,2, j \neq k$, where $Z_r^k$ are given parameters corresponding to the different rates of decays of translational and rotational/vibrational degrees of freedom. Here, we have a difference in the model presented in \cite{Pirner5} and the model in \cite{Pirner9}. The notation $(a)$ corresponds to the model in \cite{Pirner5} and the notation $(b)$ to the model in \cite{Pirner9}. %\textcolor{cyan}{In the second term we can also choose a different $Z_r^k$}.
In both cases, $M_k$ is given by
\begin{align} 
M_k(x,v,\eta_{l_k},t) = \frac{n_k}{\sqrt{2 \pi \frac{\Lambda_k}{m_k}}^d } \frac{1}{\sqrt{2 \pi \frac{\Theta_k}{m_k}}^{l_k}} \exp({- \frac{|v-u_k|^2}{2 \frac{\Lambda_k}{m_k}}}- \frac{|\eta_{l_k}- \bar{\eta}_{l_k}|^2}{2 \frac{\Theta_k}{m_k}}), \quad k=1,2,
\label{Maxwellian} 
\end{align}
and $\widetilde{M}_k$ and $\widetilde{M}_{kj}$ are given by 
\begin{align}
\widetilde{M}_k= \frac{n_k}{\sqrt{2 \pi \frac{T_k}{m_k}}^{d+l_k}} \exp \left(- \frac{m_k |v-u_k|^2}{2 T_k}- \frac{m_k|\eta_{l_k}- \bar{\eta}_{l_k}|^2}{2 T_k} \right), \quad k=1,2.
\label{Max_equ}
\end{align}
\begin{align}
\widetilde{M}_{kj}= \frac{n_k}{\sqrt{2 \pi \frac{T_{kj}}{m_k}}^{d+l_k}} \exp \left(- \frac{m_k |v-u_{kj}|^2}{2 T_{kj}}- \frac{m_k|\eta_{l_k}- \bar{\eta}_{kj,l_k}|^2}{2 T_{kj}} \right), \quad k=1,2.
\label{Max_equ3} \tag{9b}
\end{align}
where $T_k$ and $T_{kj}$ are given by 
\begin{align}
T_k:= \frac{d \Lambda_k + l_k \Theta_k}{d+l_k}= \frac{d T^{t}_k + l_k T^{r}_k}{d+l_k},
\label{equ_temp}
\end{align}
\begin{align}
T_{kj}:= \frac{d \Lambda_{kj} + l_k \Theta_{kj}}{d+l_k}. \tag{10b}
\label{equ_temp3}
\end{align}
The second equality in \eqref{equ_temp} follows from \eqref{internal}. The equation \eqref{kin_Temp} (or \eqref{kin_Temp3}) is used to involve the temperature $\Theta_k$. If we multiply \eqref{kin_Temp} (or \eqref{kin_Temp3}) by $|\eta_{l_k}|^2$, integrate with respect to $v$ and $\eta_{l_k}$ and use \eqref{equ_temp} (and also \eqref{equ_temp3} in the second model), we obtain  
\begin{subequations}
\begin{equation}
\begin{split}
\partial_t(n_k \Theta_k) +   \nabla_x\cdot (n_k \Theta_k u_k) = \frac{\nu_{kk} n_k}{Z_r^k} n_k (\Lambda_k - \Theta_k)&+ \nu_{kk} n_k n_k (\Theta_k - T_k^{r}) \\ &+ \nu_{kj} n_j n_k(\Theta_{kj} - T_k^{r}).
\end{split}
\label{relax}
\end{equation} 
\begin{equation}
\begin{split}
\partial_t(n_k \Theta_k) +   \nabla_x\cdot (n_k \Theta_k u_k) = \frac{\nu_{kk} n_k}{Z_r^k} n_k (\Lambda_k - \Theta_k)+ \nu_{kj} n_j n_k(T_{kj} - \Theta_k).
\end{split}
\label{relax3}
\end{equation}
\end{subequations}
for $k=1,2$. The initial data of $\Lambda_k$ and $\Lambda_k$ itself is determined using \eqref{internal}.

The Maxwell distributions $M_1$ and $M_2$ in \eqref{BGKmix} have the same densities, mean velocities and internal energies as $f_1$ and $f_2$, respectively. With this choice, we guarantee the conservation of the number of particles, momentum and internal energy in interactions of one species with itself (see section 3.2 in \cite{Pirner5}). The remaining parameters $n_{12}, n_{21}, u_{12}, u_{21}, \Lambda_{12}, \Lambda_{21}, \Theta_{12}$ and $\Theta_{21}$ will be determined determined using conservation of the number of particles, of total momentum and total energy, together with some symmetry considerations. 

If we assume that \begin{align} n_{12}=n_1 \quad \text{and} \quad n_{21}=n_2,  
\label{density2} 
\end{align}
we have conservation of the number of particles, see theorem 2.1 in \cite{Pirner}.
If we further assume 
 \begin{align}
u_{12}&= \delta u_1 + (1- \delta) u_2, \quad \delta \in \mathbb{R},
\label{convexvel2}
\end{align} then we have conservation of total momentum
provided that
\begin{align}
u_{21}=u_2 - \frac{m_1}{m_2} \varepsilon (1- \delta ) (u_2 - u_1), 
\label{veloc2}
\end{align}
see theorem 2.2 in \cite{Pirner}.

 In \cite{Pirner5}  it is assumed that $\bar{\eta}_{l_1} = \bar{\eta}_{l_2}=0$. In order to give a proof for the most general case, we do not make this assumption. If we do not make this assumption, we also need corresponding definitions for $\bar{\eta}_{l_1,12}$ and $\bar{\eta}_{l_2,21}$. This is done in the next definition.
\begin{definition}
\label{defeta}
We consider
\begin{align*}
\bar{\eta}_{12} &= \beta \bar{\eta}_{l_1} + (1- \beta) \bar{\eta}_{l_2}, \quad \beta \in \mathbb{R},
\end{align*}
and define $\bar{\eta}_{l_1,12}$ as the vector which is equal to $\bar{\eta}_{12}$ in the components where $\eta_{l_1}$ coincides with $\eta$ and zero otherwise. In addition, consider
\begin{align*}
\bar{\eta}_{21}=\bar{\eta}_{l_2} - \frac{m_1}{m_2} \varepsilon (1- \beta ) (\bar{\eta}_{l_2} - \bar{\eta}_{l_1})
\end{align*}
and define $\bar{\eta}_{l_2,21}$ as the vector which is equal to $\bar{\eta}_{21}$ in the components where $\eta_{l_2}$ coincides with $\eta$ and zero otherwise.
\end{definition}
Similar as in the case of the mean velocities one can prove that this definition leads to conservation of momentum.

If we further assume that $\Lambda_{12}$ and $\Theta_{12}$ are of the following form
\begin{align}
\begin{split}
\Lambda_{12} &=  \alpha \Lambda_1 + ( 1 - \alpha) \Lambda_2 + \gamma |u_1 - u_2 | ^2,  \quad 0 \leq \alpha \leq 1, \gamma \geq 0 ,\\
\Theta_{12} &= \frac{l_1 \Theta_1 + l_2 \Theta_2}{l_1 +l_2} + \tilde{\gamma} | \bar{\eta}_{l_1} - \bar{\eta}_{l_2}|^2, \quad \quad \quad \quad \quad \quad \quad ~ \tilde{\gamma} \geq 0,
\label{contemp2}
\end{split}
\end{align}
then we have conservation of total energy and a uniform choice of the temperatures 
provided that
\begin{align}
\begin{split}
\Lambda_{21} &=\left[ \frac{1}{d} \varepsilon m_1 (1- \delta) \left( \frac{m_1}{m_2} \varepsilon ( \delta - 1) + \delta +1 \right) - \varepsilon \gamma \right] |u_1 - u_2|^2 \\&+ \varepsilon ( 1 - \alpha ) \Lambda_1 + ( 1- \varepsilon ( 1 - \alpha)) \Lambda_2, \\
\Theta_{21}&= \varepsilon \frac{l_1}{l_1 +l_2} \Theta_1 + \left( 1- \varepsilon \frac{l_1}{l_1+l_2} \right) \Theta_2 - \frac{l_1}{l_2} \varepsilon \tilde{\gamma} |\bar{\eta}_{l_1} - \bar{\eta}_{l_2}|^2 \\&- \varepsilon \frac{m_1}{l_2} (|\bar{\eta}_{l_1,12}|^2 - |\bar{\eta}_{l_1}|^2) - \frac{m_2}{l_2} (|\bar{\eta}_{l_2,21}|^2 -|\bar{\eta}_{l_2}|^2)
\label{temp2}
\end{split}
\end{align}
see theorem 3.2 and remark 3.2 in \cite{Pirner5}.
In order to ensure the positivity of all temperatures, we need to restrict $\delta$, $\beta$, $\gamma$ and $\tilde{\gamma}$ to 
 \begin{align}
 \begin{split}
0 \leq \gamma  \leq \frac{m_1}{d} (1-\delta) \left[(1 + \frac{m_1}{m_2} \varepsilon ) \delta + 1 - \frac{m_1}{m_2} \varepsilon \right], \\
0 \leq \tilde{\gamma}  \leq \frac{m_1}{l_1} (1-\beta) \left[(1 + \frac{m_1}{m_2} \varepsilon ) \beta + 1 - \frac{m_1}{m_2} \varepsilon \right], 
\end{split}
 \label{gamma2}
 \end{align}
and
\begin{align}
\begin{split}
 \frac{ \frac{m_1}{m_2}\varepsilon - 1}{1+\frac{m_1}{m_2}\varepsilon} \leq  \delta \leq 1, \quad
  \frac{ \frac{m_1}{m_2}\varepsilon - 1}{1+\frac{m_1}{m_2}\varepsilon} \leq  \beta \leq 1,
  \end{split}
\label{gammapos2}
\end{align}
see theorem 2.5 in \cite{Pirner} for $N=3$ in the mono atomic case. 

For the convenience of the reader, we want to summarize our models in order to clarify which equation and definitions belong to model 1 and which to model 2. In both models we use equation \eqref{BGK} with definitions \eqref{BGKmix}, \eqref{coll}, \eqref{asscoll}, \eqref{density2}, \eqref{convexvel2}, \eqref{veloc2}, definition \ref{defeta}, \eqref{contemp2} and \eqref{temp2} for the time evolution of $f_1$ and $f_2$. In order to evolve $\Theta_1$ and $\Theta_2$, we couple equation \eqref{BGK} in model 1 with equation \eqref{kin_Temp} with definitions \eqref{Maxwellian}, \eqref{Max_equ} and \eqref{equ_temp}, whereas model 2 uses equation \eqref{kin_Temp3} with definitions \eqref{Maxwellian}, \eqref{Max_equ}, \eqref{Max_equ3}, \eqref{equ_temp} and \eqref{equ_temp3}. Both models are then coupled with equation \eqref{internal} to determine $\Lambda_1$ and $\Lambda_2$.
\section{Equation of state in the macroscopic equations}
\label{sec3}
In this section we want to illustrate the effect of the additional variable $\eta$ on the equation of state in the macroscopic equations. We want to illustrate this in the case of one species.
We consider a distribution function $f(x,v,\eta,t)> 0$ introduced in the previous section for one species. 
We relate the distribution function to  macroscopic quantities by mean-values of $f$ as follows
\begin{align}
\int \int f(v, \eta) \begin{pmatrix}
1 \\ v \\ \eta \\ m~ |v-u|^2 \\ m~ |\eta - \bar{\eta}_l |^2 \\ 
\end{pmatrix} 
dv d\eta =: \begin{pmatrix}
n \\ n~ u \\ n~ \bar{\eta}_l \\ 3~ n~ T^{t} \\ l~ n~ T^{r} \\ 
\end{pmatrix}.
\label{moments10}
\end{align} 
Now, assume that we assume that $\bar{\eta}_l$ is fixed and equal to a vector  $w$  in $\mathbb{R}^l$ such that $|w|^2= 2 \frac{p_{\infty}}{m n}$ for a given constant $p_{\infty}$ in the Maxwell distribution in \eqref{BGKmix}.   Since $|w|^2$ represents the kinetic energy in the rotation and vibration, $p_{\infty}$ may be related to the moment of inertia in the case of rotations or the Hook'sches law in the case of vibrations. In this case, we will obtain an equation of state given by 
$$p = n T + \text{const.}$$  This is shown in the following. The additional constant takes into account an attractive force between the particles which is neglected in the case of an ideal gas.

\begin{theorem}[Macroscopic equations]
Assume $f$ decays fast enough to zero in the $v$ and $\eta$ variables and is a solution to \eqref{BGK}. If in addition $f$ is in equilibrium meaning it is a Maxwell distribution and the temperatures $T^t$ and $T^r$ are equal to $T$,  it satisfies the following local macroscopic conservation laws.
\vspace{-0.5cm}
\begin{multline*}
\\
\partial_t n + \nabla_x(n u)=0
\\
 \partial_t(m n u)+\nabla_x \left(n T \right) + \nabla_x \cdot (m n u \otimes u  ) = 
0,
\\
\partial_t \left(\frac{m}{2} n |u|^2 + \frac{3+l}{2} n T \right) + \nabla_x \cdot \left(\left(\frac{5+l}{2} n T + p_{\infty} \right) u \right)+ \nabla_x \cdot \left(\frac{m}{2} n |u|^2 u \right)  = 0,
\\
\end{multline*}
\end{theorem}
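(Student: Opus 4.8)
The plan is to derive the three conservation laws as the moment (hydrodynamic) equations of the single-species kinetic equation, specialized to $d=3$ as in \eqref{moments10}. Writing the one-species version of \eqref{BGK} as $\partial_t f + v\cdot\nabla_x f = \nu n (M-f)$, I would multiply by the collision invariants $\phi(v,\eta) \in \{1,\, mv,\, \frac{m}{2}(|v|^2+|\eta|^2)\}$ and integrate over $(v,\eta)\in\mathbb{R}^3\times\mathbb{R}^l$. Because $f$ is in equilibrium we have $f=M$, so the right-hand side vanishes identically; the fast decay of $f$ lets me pull $\nabla_x$ out of the transport term, turning each moment equation into $\partial_t\!\int\phi f + \nabla_x\cdot\!\int v\,\phi f = 0$. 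The whole proof then reduces to evaluating these Gaussian moments.

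For $\phi=1$ this gives $\partial_t n + \nabla_x\cdot(nu)=0$ directly from the definitions in \eqref{moments10}. For $\phi=mv$ I get $\partial_t(mnu)+\nabla_x\cdot\mathbb{P}=0$ with $\mathbb{P}=\int mv\otimes v\,f\,dv\,d\eta$; splitting $v=u+(v-u)$ and using $\int (v-u)f=0$ gives $\mathbb{P}=mn\,u\otimes u + \int m(v-u)\otimes(v-u)M$. Since $M$ is Maxwellian with $T^t=T$, the peculiar pressure tensor is isotropic, $\int m(v-u)\otimes(v-u)M=nT\,\mathbb{I}$ (its trace being $d\,nT^t=3nT$), so $\nabla_x\cdot\mathbb{P}=\nabla_x(nT)+\nabla_x\cdot(mn\,u\otimes u)$, which is the momentum law. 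Note the internal variable $\eta$ does not couple to momentum, so no $p_\infty$ appears here.

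The decisive step is the energy moment $\phi=\frac{m}{2}(|v|^2+|\eta|^2)$, and I would handle it by the shift $v=u+c$, $\eta=w+\xi$, where $w=\bar\eta_l$ is the fixed vector with $\frac{m}{2}n|w|^2=p_\infty$. Expanding $|v|^2$ and $|\eta|^2$ and using $\int c\,M=0$, $\int\xi\,M=0$ together with the temperature moments $\frac{m}{2}\int|c|^2 M=\frac{3}{2}nT^t$ and $\frac{m}{2}\int|\xi|^2 M=\frac{l}{2}nT^r$, the energy density becomes $\frac{m}{2}n|u|^2+\frac{3+l}{2}nT+p_\infty$; since $p_\infty$ is the given constant it carries no time dependence and drops out of $\partial_t$, leaving exactly the density written in the statement. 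For the flux $\int\frac{m}{2}(|v|^2+|\eta|^2)v\,M$ I would again expand, discarding every term that is odd in $c$ or in $\xi$ and using $\int c\otimes c\,M=\frac{nT^t}{m}\mathbb{I}$; the surviving terms assemble to $\frac{m}{2}n|u|^2u+\frac{5+l}{2}nT\,u+p_\infty u$, where the term $p_\infty u$ is precisely the new contribution coming from $\frac{m}{2}|w|^2 n$. Collecting gives the stated energy law.

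The main obstacle I anticipate is purely in the energy step: keeping careful track of which mixed Gaussian moments in $c$ and $\xi$ survive, and correctly isolating the $\frac{m}{2}n|w|^2=p_\infty$ piece so that it appears as a constant in the energy density (hence invisible under $\partial_t$) but as the genuine flux correction $p_\infty u$. This is exactly the mechanism that replaces the ideal-gas law $p=nT$ by $p=nT+\text{const.}$, so the bookkeeping of the $w$-terms is where all the content of the theorem sits; the continuity and momentum equations are routine by comparison.
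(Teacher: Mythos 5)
Your proposal is correct and follows essentially the same route as the paper: take moments of the one-species kinetic equation against $1$, $mv$, $\frac{m}{2}(|v|^2+|\eta|^2)$, use the equilibrium assumption $f=M$ to annihilate the right-hand side, pull the derivatives out, and evaluate the Gaussian moments, with $\frac{m}{2}n|w|^2=p_\infty$ appearing as a constant in the energy density (hence invisible under $\partial_t$) and as the flux correction $p_\infty u$. Your centered-variable and parity bookkeeping ($v=u+c$, $\eta=w+\xi$) is merely a tidier way of computing the same moments the paper writes out explicitly; it even sidesteps the paper's slips in the kinetic-energy flux (where $\tfrac{3}{2}nT$ should read $\tfrac{5}{2}nT$, and $\tfrac{5+d}{2}$ in the final display should read $\tfrac{5+l}{2}$).
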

\vspace{-0.7cm}
\begin{proof}
If we integrate equation \eqref{BGK} with respect to $v$ and $\eta$ and use $f=M$, we get:
\begin{align*}
\int \int \partial_t M dv d\eta + \int \int v \cdot \nabla_x   M dv d\eta = 0.
\end{align*}
If we formally exchange integration and derivatives, we obtain
\begin{align*}
\partial_t \int \int  M dv d\eta + \nabla_x \cdot \int \int  v M dv d\eta = 0.
\end{align*}
This is equivalent to
\begin{align*}
\partial_t n + \nabla_x \cdot (n u)=0,
\end{align*}
since we have
{\small
\begin{align*}
\int \int M dv d\eta &= \int  \frac{n}{\sqrt{2 \pi \frac{T}{m}}^3 }  \exp \left(- \frac{|v-u|^2}{2 \frac{T}{m}} \right) dv \int \frac{1}{\sqrt{2 \pi \frac{T}{m}}^{l}} \exp \left(- \frac{|\eta- w|^2}{2 \frac{T}{m}} \right) d\eta \\&=n
\end{align*}}
and
{\small
\begin{align*}
\int \int M v dv d\eta &= \int v \frac{n}{\sqrt{2 \pi \frac{T}{m}}^3 }  \exp \left(- \frac{|v-u|^2}{2 \frac{T}{m}} \right) dv \int \frac{1}{\sqrt{2 \pi \frac{T}{m}}^{l}} \exp \left(- \frac{|\eta- w|^2}{2 \frac{T}{m}} \right) d\eta \\&=n u.
\end{align*}}
Multiplying the equation \eqref{BGK} by $m v$, integrating it with respect to  $v$ and $\eta$ and using that $f$ is equal to a Maxwell distribution with temperatures equal to $T$, leads to 
\begin{align*}
m \int \int v \partial_t M dv d\eta + m \int \int v ~ v \cdot \nabla_x M dv d\eta  = 0.
\end{align*}
We formally exchange derivative and integration and obtain 
\begin{align*}
m ~\partial_t(n u) + \nabla_x \cdot \int \int m v \otimes v M dv d\eta =0 .
\end{align*}
We can compute
{\footnotesize
\begin{align*}
\int \int v \otimes v M dv d\eta &= \int v \otimes v \frac{n}{\sqrt{2 \pi \frac{T}{m}}^3 }  \exp \left(- \frac{|v-u|^2}{2 \frac{T}{m}} \right) dv \int \frac{1}{\sqrt{2 \pi \frac{T}{m}}^{l}} \exp \left(- \frac{|\eta- w|^2}{2 \frac{T}{m}} \right) d\eta \\&=n u \otimes u + n \frac{T}{m},
\end{align*}}
so the second term turns into
\begin{align*}
\nabla_x (nT) + \nabla_x \cdot (m n u \otimes u  ) .
\end{align*}
So all in all, we get
\begin{align*}
 \partial_t(m n u)+\nabla_x (nT) + \nabla_x \cdot (m n u \otimes u  )  = 
0 .
\end{align*}
Multiplying the equation \eqref{BGK} by $\frac{m}{2}(|v|^2 + |\eta|^2)$, integrating it with respect to $v$ and $\eta$ and using that $f$ is a Maxwell distribution with temperatures equal to $T$, leads to
\begin{align*}
\frac{m}{2} \int \int (|v|^2 + |\eta|^2) \partial_t M dv d\eta + \frac{m}{2} \int \int(|v|^2 + |\eta|^2) v \cdot  \nabla_x M dv d\eta
= 0.
\end{align*}
We formally exchange derivative and integration and obtain
\begin{align*}
 \partial_t \left(\frac{m}{2} n |u|^2 + \frac{3+l}{2} n T \right) + \nabla_x \cdot \int \int m v (|v|^2 +|\eta|^2) M dv d\eta=0,
\end{align*}
since we have
\begin{align*}
m \int (|v|^2 + |\eta|^2) M dv d\eta &= \frac{m n |u|^2}{2} + \frac{3}{2} n T + \frac{mn |w|^2}{2} + \frac{l}{2} n T \\ &= \frac{m n |u|^2}{2} + \frac{3+l}{2} n T + p_{\infty}
\end{align*}
where $p_{\infty}$ is a constant, so its time derivative vanishes.
Last, we compute
\begin{align*}
m &\int v (|v|^2 + |\eta|^2) M dv d\eta = \int |v|^2 v M dv d\eta + \int |\eta|^2 v M dv d\eta \\&= \left(\frac{mn}{2} |u|^2 + \frac{3}{2} n T \right) u \\&+ \int v \frac{n}{\sqrt{2 \pi \frac{T}{m}}^3 }  \exp \left(- \frac{|v-u|^2}{2 \frac{T}{m}} \right) dv \int |\eta|^2 \frac{1}{\sqrt{2 \pi \frac{T}{m}}^{l}} \exp \left(- \frac{|\eta- w|^2}{2 \frac{T}{m}} \right) d\eta \\&= \left(\frac{mn}{2} |u|^2 + \frac{3}{2} n T \right) u + \left(\frac{l}{2} n T + p_{\infty} \right) u
\end{align*}
and obtain
\begin{align*}
\partial_t \left(\frac{m}{2} n |u|^2 + \frac{3+l}{2} n T  \right) + \nabla_x \cdot \left[\left( \frac{mn}{2} |u|^2 + \frac{5+d}{2} n T + p_{\infty} \right) u \right] = 0. 
\end{align*}
%{\allowdisplaybreaks
%\begin{multline*}
%\\
%\partial_t n_1 + \nabla_x \cdot(n_1 u_1)=0,
%\\
%\partial_t n_2 + \nabla_x \cdot (n_2 u_2)=0,
%\\
% \partial_t(m_1 n_1 u_1)+\nabla_x \cdot \mathbb{P}_1 + \nabla_x \cdot (m_1 n_1 u_1 \otimes u_1 )=Q_{12}(f_1, f_2) m_1 v dv, 
%\\
% \partial_t(m_2 n_2 u_2)+\nabla_x \cdot \mathbb{P}_2 + \nabla_x \cdot (m_2 n_2 u_2 \otimes u_2  ) =Q_{21}(f_2, f_1) m_2 v dv, 
%\\
%\partial_t(\frac{m_1}{2} n_1 |u_1|^2 + \frac{3}{2} n_1 T_1) + \nabla_x \cdot q_1 = Q_{12}(f_1, f_2) \frac{m_1}{2} |v|^2 dv,
%\\
%\partial_t(\frac{m_2}{2} n_2 |u_2|^2 + \frac{3}{2} n_2 T_2) + \nabla_x \cdot q_2 = Q_{21}(f_2, f_1) \frac{m_2}{2} |v|^2 dv. 
%\\
%\end{multline*}}
\end{proof}
\section{Existence, Uniqueness and Positivity of solutions }
\label{sec4}
\subsection{Existence and Uniqueness of mild solutions}

 In the following, we want to study mild solutions of \eqref{BGK} coupled with \eqref{internal} and \eqref{kin_Temp}, and mild solutions of \eqref{BGK} coupled wirh \eqref{internal} and \eqref{kin_Temp3}. For a simpler handling later in the existence and uniqueness proof for model 1, we first arrange our system \eqref{BGK} and \eqref{kin_Temp} to the following equivalent system. We define $z_k= Z^k_r \frac{d}{d+l_k}$ and
 $$ g_k=  M_k - f_k,$$ and then we consider the following mild formulation of model 1 given by
\begin{definition}\label{milddef}
We call $(f_1, f_2, M_1, M_2)$ with $(1+|v|^2+ |\eta_{l_k}|^2)f_k, M_k \in L^1(dv d\eta_{l_k}),$ $ f_1,f_2, M_1, M_2 \geq 0$ a mild solution to \eqref{BGK} coupled with \eqref{kin_Temp} and \eqref{internal} under the conditions of the collision frequencies \eqref{asscoll} if and only if $f_1,f_2, M_1, M_2$ satisfy
{\footnotesize
\begin{align*}
\begin{split}
&f_k(x,v,\eta_{l_k},t)= e^{-\alpha_k(x,v,t)} f_k^0(x-tv,v, \eta_{l_k}) \\ &+ e^{-\alpha_k(x,v,t)} \int_0^t [ \widetilde{\nu}_{kk} \frac{n_k(x+(s-t)v,s)}{n_k(x+(s-t)v,s)+ n_j(x+(s-t)v,s)} M_k(x+(s-t)v,v,\eta_{l_k},s) \\ &+ \widetilde{\nu}_{kj} \frac{n_j(x+(s-t)v,s)}{n_k(x+(s-t)v,s)+ n_j(x+(s-t)v,s)} M_{kj}(x+(s-t)v,v,\eta_{l_k},s)] e^{\alpha_k(x+(s-t)v,v,s)} ds,
\end{split}
%\\
%\begin{split}
%&f_2(x,v,t)= e^{-\alpha_2(x,v,t)} f_2^0(x-tv,v) \\ &+ e^{-\alpha_2(x,v,t)} \int_0^t [ \widetilde{\nu}_{22} \frac{n_2(x+(s-t)v,s)}{n_1(x+(s-t)v,s)+ n_2(x+(s-t)v,s)} M_2(x+(s-t)v,v,s) \\ &+ \widetilde{\nu}_{21} \frac{n_1(x+(s-t)v,s)}{n_1(x+(s-t)v,s)+ n_2(x+(s-t)v,s)} M_{21}(x+(s-t)v,v,s)] e^{\alpha_2(x+(s-t)v,v,s)} ds
%\end{split}
\end{align*}
\begin{align*}
\begin{split}
&g_k(x,v,\eta_{l_k},t)=  g_k^0(x-tv,v, \eta_{l_k}) \\ &+  \int_0^t [ \frac{\widetilde{\nu}_{kk}}{z_r} \frac{n_k(x+(s-t)v,s)}{n_k(x+(s-t)v,s)+ n_j(x+(s-t)v,s)} \\&(\widetilde{M}_k(x+(s-t)v,v,\eta_{l_k},s)- M_k(x+(s-t)v,v,\eta_{l_k},s)) ]%+ \widetilde{\nu}_{kj} \frac{n_j(x+(s-t)v,s)}{n_k(x+(s-t)v,s)+ n_j(x+(s-t)v,s)} M_{kj}(x+(s-t)v,v,\eta_{l_k},s)
  ds,
\end{split}
%\\
%\begin{split}
%&f_2(x,v,t)= e^{-\alpha_2(x,v,t)} f_2^0(x-tv,v) \\ &+ e^{-\alpha_2(x,v,t)} \int_0^t [ \widetilde{\nu}_{22} \frac{n_2(x+(s-t)v,s)}{n_1(x+(s-t)v,s)+ n_2(x+(s-t)v,s)} M_2(x+(s-t)v,v,s) \\ &+ \widetilde{\nu}_{21} \frac{n_1(x+(s-t)v,s)}{n_1(x+(s-t)v,s)+ n_2(x+(s-t)v,s)} M_{21}(x+(s-t)v,v,s)]  ds
%\end{split}
\end{align*}}
where $\alpha_k$ is given by
{\small
\begin{align*}
\begin{split}
\alpha_k(x,v,t) = \int_0^t [\widetilde{\nu}_{kk} \frac{n_k(x+(s-t)v,s)}{n_k(x+(s-t)v,s) + n_j(x+(s-t)v,s) }\\ +\widetilde{\nu}_{kj} \frac{n_j(x+(s-t)v,s)}{n_k(x+(s-t)v,s) + n_j(x+(s-t)v,s) } ] ds,
\end{split}
%\\
%\begin{split}
%\widetilde{\alpha}_k(x,v,t) = \int_0^t [z_r^k \widetilde{\nu}_{22} \frac{n_2(x+(s-t)v,s)}{n_1(x+(s-t)v,s) + n_2(x+(s-t)v,s) } ] ds.
%\end{split}
\end{align*}}
and $$ M_k = g_k+f_k.$$
for $k,j =1,2, ~k\neq j$.
\end{definition}
For model 2, we consider the following mild formulation
\begin{definition} \label{milddef2} 
We call $(f_1, f_2, M_1, M_2)$ with $(1+|v|^2+ |\eta_{l_k}|^2)f_k, M_k \in L^1(dv d\eta_{l_k}),$ $ f_1,f_2, M_1, M_2 \geq 0$ a mild solution to \eqref{BGK} coupled with \eqref{kin_Temp3} and \eqref{internal} under the conditions of the collision frequencies \eqref{asscoll} if and only if $f_1,f_2, M_1, M_2$ satisfy
{\footnotesize
\begin{align*}
\begin{split}
&f_k(x,v,\eta_{l_k},t)= e^{-\alpha_k(x,v,t)} f_k^0(x-tv,v, \eta_{l_k}) \\ &+ e^{-\alpha_k(x,v,t)} \int_0^t [ \widetilde{\nu}_{kk} \frac{n_k(x+(s-t)v,s)}{n_k(x+(s-t)v,s)+ n_j(x+(s-t)v,s)} M_k(x+(s-t)v,v,\eta_{l_k},s) \\ &+ \widetilde{\nu}_{kj} \frac{n_j(x+(s-t)v,s)}{n_k(x+(s-t)v,s)+ n_j(x+(s-t)v,s)} M_{kj}(x+(s-t)v,v,\eta_{l_k},s)] e^{\alpha_k(x+(s-t)v,v,s)} ds,
\end{split}
%\\
%\begin{split}
%&f_2(x,v,t)= e^{-\alpha_2(x,v,t)} f_2^0(x-tv,v) \\ &+ e^{-\alpha_2(x,v,t)} \int_0^t [ \widetilde{\nu}_{22} \frac{n_2(x+(s-t)v,s)}{n_1(x+(s-t)v,s)+ n_2(x+(s-t)v,s)} M_2(x+(s-t)v,v,s) \\ &+ \widetilde{\nu}_{21} \frac{n_1(x+(s-t)v,s)}{n_1(x+(s-t)v,s)+ n_2(x+(s-t)v,s)} M_{21}(x+(s-t)v,v,s)] e^{\alpha_2(x+(s-t)v,v,s)} ds
%\end{split}
\end{align*}
\begin{align*}
\begin{split}
&M_k(x,v,\eta_{l_k},t)=  M_k^0(x-tv,v, \eta_{l_k}) +  \int_0^t [ \frac{\widetilde{\nu}_{kk}}{z_r} \frac{n_k(x+(s-t)v,s)}{n_k(x+(s-t)v,s)+ n_j(x+(s-t)v,s)} \\&(\widetilde{M}_k(x+(s-t)v,v,\eta_{l_k},s)- M_k(x+(s-t)v,v,\eta_{l_k},s)) ]\\&+ \widetilde{\nu}_{kj} \frac{n_j(x+(s-t)v,s)}{n_k(x+(s-t)v,s)+ n_j(x+(s-t)v,s)}\\& ( \widetilde{M}_{kj}(x+(s-t)v,v,\eta_{l_k},s) - M_k(x+(s-t)v,v,\eta_{l_k},s))
 ds,
\end{split}
%\\
%\begin{split}
%&f_2(x,v,t)= e^{-\alpha_2(x,v,t)} f_2^0(x-tv,v) \\ &+ e^{-\alpha_2(x,v,t)} \int_0^t [ \widetilde{\nu}_{22} \frac{n_2(x+(s-t)v,s)}{n_1(x+(s-t)v,s)+ n_2(x+(s-t)v,s)} M_2(x+(s-t)v,v,s) \\ &+ \widetilde{\nu}_{21} \frac{n_1(x+(s-t)v,s)}{n_1(x+(s-t)v,s)+ n_2(x+(s-t)v,s)} M_{21}(x+(s-t)v,v,s)]  ds + \int_0^t 
%\end{split}
\end{align*}}
where $\alpha_k$ is given by
{\small
\begin{align*}
\begin{split}
\alpha_k(x,v,t) = \int_0^t [\widetilde{\nu}_{kk} \frac{n_k(x+(s-t)v,s)}{n_k(x+(s-t)v,s) + n_j(x+(s-t)v,s) }\\ +\widetilde{\nu}_{kj} \frac{n_j(x+(s-t)v,s)}{n_k(x+(s-t)v,s) + n_j(x+(s-t)v,s) } ] ds,
\end{split}
%\\
%\begin{split}
%\widetilde{\alpha}_k(x,v,t) = \int_0^t [z_r^k \widetilde{\nu}_{22} \frac{n_2(x+(s-t)v,s)}{n_1(x+(s-t)v,s) + n_2(x+(s-t)v,s) } ] ds.
%\end{split}
\end{align*}}
%and $$ M_k = g_k+f_k.$$
for $k,j =1,2, ~k\neq j$.
\end{definition}
First, we present some estimates on macroscopic quantities which we need later for the existence and uniqueness proof. %\textcolor{blue}{Maybe a motivation of the proof here, since the estimates are motivated.}
\begin{theorem}
Define the variable $\xi_{l_k} = (v, \eta_{l_k}), \bar{\xi}_{l_k}=(u,\bar{\eta}_{l_k})$. For any  $(f_1, f_2,$ $ M_1, M_2)$ with $(1+|\xi_{l_k}|^2)f_k, (1+|\xi_{l_k}|^2)M_k \in L^1(d\xi_{l_k}),$ $ f_1,f_2, M_1, M_2 \geq 0$, we define the moments and macroscopic parameters as in \eqref{moments}, \eqref{convexvel2}, \eqref{veloc2}, definition \ref{defeta}, \eqref{contemp2} and \eqref{temp2} and set
\begin{align}
N_q(f_k)(\xi_{l_k})= \sup_{\xi_{l_k}} |\xi_{l_k}|^q f_k(\xi_{l_k}), \quad q\geq0, k=1,2.
\label{Nq}
\end{align}
Then the following estimates hold 
\begin{enumerate}
\item[(i.1)] $\frac{n_k}{T_k^{(d+l_k)/2}} \leq C N_0(f_k),~\frac{n_k}{T_k^{(d+l_k)/2}} \leq C N_0(M_k), \frac{n_k}{\Lambda_k^{d/2}} \leq C N_0(M_k), ~ \frac{n_k}{\Theta_k^{l_k/2}} \leq C N_0(M_k),\quad  k=1,2,$

\item[(i.2)] $ \frac{n_1}{\Lambda_{12}^{d/2}} \leq C N_0(M_1),~ \frac{n_1}{\Theta_{12}^{l_k/2}} \leq C N_0(M_1),$ 
\item[(i.3)] $ \frac{n_1}{\Lambda_{21}^{d/2}} \leq C N_0(M_2),~ \frac{n_1}{\Theta_{21}^{l_k/2}} \leq C N_0(M_2).$
\item[(i.2b)/(i.3b)] $\frac{n_k}{T_{kj}^{(d+l_k)/2}} \leq C N_0(M_k),$ for $k,j=1,2, ~ k\neq j$
\end{enumerate}
\label{theoest1}
\end{theorem}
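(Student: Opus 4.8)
The plan is to derive every bound from one elementary truncation estimate. Given any nonnegative $h\in L^1$ with mass $n=\int h\,d\xi$ and finite second moment about its mean $\bar\xi$, I split the mass integral over $\{|\xi-\bar\xi|\le R\}$ and its complement: on the ball I use $h\le N_0(h)$, on the tail I use Markov's inequality, obtaining $n\le \omega_{D}R^{D}N_0(h)+R^{-2}\int|\xi-\bar\xi|^2h\,d\xi$, where $D$ is the ambient dimension and $\omega_D$ the volume of the unit ball. Choosing $R^2$ proportional to $\big(\int|\xi-\bar\xi|^2h\big)/n$ absorbs the tail into $\tfrac12 n$ and yields $n\,\big(\int|\xi-\bar\xi|^2h/n\big)^{-D/2}\le C\,N_0(h)$; that is, ``mass over temperature$^{D/2}$'' is controlled by the sup-norm. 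Everything in Theorem \ref{theoest1} is an instance of this, applied to the right function in the right variable.

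First I would apply the estimate in the combined variable $\xi_{l_k}=(v,\eta_{l_k})$, so $D=d+l_k$. By \eqref{moments} together with \eqref{equ_temp} and \eqref{internal}, both $f_k$ and $M_k$ carry the same combined second moment, namely $\int m_k|\xi_{l_k}-\bar\xi_{l_k}|^2 f_k\,d\xi_{l_k}=(d+l_k)\,n_kT_k$ (and likewise for $M_k$, using $d\Lambda_k+l_k\Theta_k=(d+l_k)T_k$). Hence the truncation estimate gives at once the two combined bounds $n_k/T_k^{(d+l_k)/2}\le C N_0(f_k)$ and $\le C N_0(M_k)$ of (i.1). For the separate bounds $n_k/\Lambda_k^{d/2}$ and $n_k/\Theta_k^{l_k/2}$ I would run the same truncation on the velocity marginal $\int M_k\,d\eta_{l_k}$ (dimension $d$, second moment $d\,n_k\Lambda_k$) and on the internal marginal (dimension $l_k$, second moment $l_k\,n_k\Theta_k$); since $M_k$ is the explicit product Gaussian \eqref{Maxwellian}, each marginal is itself Gaussian with an explicitly computable supremum, so the only issue is to close the inequality against the \emph{full} supremum $N_0(M_k)$, which is the delicate point discussed below.

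Next, the interspecies estimates (i.2), (i.3) and (i.2b)/(i.3b) I would reduce to (i.1) rather than reprove. The defining relations \eqref{contemp2} and \eqref{temp2}, together with the admissible ranges \eqref{gamma2}--\eqref{gammapos2}, exhibit $\Lambda_{12},\Theta_{12},\Lambda_{21},\Theta_{21}$ as convex combinations of the $\Lambda_k,\Theta_k$ plus nonnegative multiples of $|u_1-u_2|^2$ or $|\bar\eta_{l_1}-\bar\eta_{l_2}|^2$; dropping the nonnegative terms gives lower bounds $\Lambda_{kj}\ge c\,\Lambda_k$, $\Theta_{kj}\ge c\,\Theta_k$, and hence $T_{kj}\ge c\,T_k$, with $c$ depending only on the fixed model parameters $\alpha,\beta,\gamma,\tilde\gamma,\delta,\varepsilon$. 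Substituting these into the already-established bounds of (i.1) yields $n_k/\Lambda_{kj}^{d/2}\le c^{-d/2}\,n_k/\Lambda_k^{d/2}\le C N_0(M_k)$, and the analogous $\Theta_{kj}$ and $T_{kj}$ inequalities.

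The main obstacle is the separate-temperature part of (i.1): bounding a single-block quantity such as $n_k/\Lambda_k^{d/2}$ by $N_0(M_k)=\sup_{\xi_{l_k}}M_k$. The truncation argument on the velocity marginal naturally produces $\sup_v\!\int M_k\,d\eta_{l_k}$ on the right, and this marginal supremum equals $N_0(M_k)$ times a factor $(2\pi\Theta_k/m_k)^{l_k/2}$; passing from it to $C\,N_0(M_k)$ therefore needs an upper bound on the complementary temperature $\Theta_k$ (respectively $\Lambda_k$ for the internal block). Such a bound is not universal, but it is precisely what the invariant set of the subsequent fixed-point scheme provides, so I would either state the estimate with $C$ allowed to depend on the temperature bounds valid on that set, or invoke the explicit Gaussian maximum $N_0(M_k)=n_k(m_k/2\pi)^{(d+l_k)/2}\Lambda_k^{-d/2}\Theta_k^{-l_k/2}$ directly. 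A secondary point to verify is the uniformity of the constant $c$ across the whole admissible parameter range, in particular at the degenerate endpoints (such as $\alpha=0$ or $\varepsilon(1-\alpha)=1$), where the relevant lower bound has to be read off from the other species' temperature.
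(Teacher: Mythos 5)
Your core truncation estimate (split the mass at radius $R$, bound the ball by $N_0$, the tail by Markov, optimize $R$) is precisely the Perthame--Pulvirenti inequality $(2.2)$ argument that the paper invokes for $(i.1)$, and your reduction of $(i.2)$, $(i.3)$ and $(i.2b)/(i.3b)$ to $(i.1)$ via lower bounds $\Lambda_{kj}\geq c\,\Lambda_k$, $\Theta_{kj}\geq c\,\Theta_k$, $T_{kj}\geq c\,T_k$ extracted from \eqref{contemp2}, \eqref{temp2} under \eqref{gamma2} is in substance what the paper does by deferring to theorem 3.1.1 of \cite{Pirner3}. So for the combined-variable bounds and the mixture bounds your route and the paper's coincide.

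Where you depart from the paper is on the single-block bounds $n_k/\Lambda_k^{d/2}\leq CN_0(M_k)$ and $n_k/\Theta_k^{l_k/2}\leq CN_0(M_k)$, and there your caution is justified: the paper's recipe (``replace $|v-u_k|^2$ by $|\xi_{l_k}-\bar{\xi}_{l_k}|^2$ and use $M_k$ instead of $f_k$'') either reproduces the combined bound in $T_k$ or, if one truncates in $v$ only, produces the marginal supremum $\sup_v\int M_k\,d\eta_{l_k}$ on the right, which exceeds $N_0(M_k)$ by a factor of order $\Theta_k^{l_k/2}$. As stated, the inequality is in fact false for general nonnegative $M_k$: in $d=l_k=1$, $m_k=1$, take $M_k=\chi_{\{|v|\leq 1\}}\chi_{\{|\eta|\leq L\}}$, so that $n_k=4L$, $\Lambda_k=1/3$, $N_0(M_k)=1$, and $n_k/\Lambda_k^{1/2}=4\sqrt{3}\,L\to\infty$. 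The estimate only becomes meaningful through the Maxwellian structure \eqref{Maxwellian}, where $N_0(M_k)=n_k(m_k/2\pi)^{(d+l_k)/2}\Lambda_k^{-d/2}\Theta_k^{-l_k/2}$, and even then it requires an upper bound on the complementary temperature, exactly as you observe. So your two proposed repairs --- letting $C$ depend on the temperature bounds valid on the invariant set \eqref{Omega} of theorem \ref{ex2}, or invoking the explicit Gaussian maximum in the only situation where the estimate is actually applied --- are not optional refinements; one of them is needed for the statement to be usable, and the paper's proof sketch glosses over this. The same caveat applies to the constant in your reduction for $(i.2)$/$(i.3)$ at degenerate parameter values: at $\alpha=0$ one only has $\Lambda_{12}\geq\Lambda_2$, so the bound closes against $\frac{n_1}{n_2}N_0(M_2)$ rather than $N_0(M_1)$, which you also correctly flag. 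In short, your skeleton is the paper's, but your version identifies, and proposes the fix for, a real gap in the paper's own argument.
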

\begin{proof}
The proof of $(i.1)$ is analougous to the proof of the inequality $(2.2)$ in \cite{PerthamePulvirenti}. For the first inequality in $(i.1)$ replace $|v-u_k|^2$ by $|\xi_{l_k}-\bar{\xi}_{l_k}|^2$ and repeat all the steps done there. For the second inequality replace $|v-u_k|^2$ by $|\xi_{l_k}-\bar{\xi}_{l_k}|^2$ and use $M_k$ instead of $f_k$. For the third inequality replace $|v-u_k|^2$ by $|\xi_{l_k}-\bar{\xi}_{l_k}|^2$ use $M_k$ instead of $f_k$, and for the last inequality replace $|v-u_k|^2$ by $|\eta_{l_k} - \bar{\eta}_{l_k}|^2$ and also $f_k$ by $M_k$.  

%We want to repeat it here for the convenience of the reader.
% We consider $n_k= \int f_k(v) dv$ for $k=1,2$. We split the  integration with respect to the velocity $v$ into $|u_k-v|>R_k$ and $|u_k-v|\leq R_k$ for some $R_k$ determined later. Then in the first integral we have $1 < \frac{|u_k-v|^2}{R_k^2}$ and obtain
%$$n_k\leq \frac{1}{R_k^2} \int_{|u_k-v|>R_k} |v-u_k|^2 f_k(v) dv + \int_{|u_k-v| \leq R_k} f_k(v) dv.$$
%The first integral is linked to the temperature $T_k$ defined by \eqref{moments2} and the second integral can be estimated by the supremum of $f_k$. So
%$$n_k\leq \frac{n_k N T_k}{m_k R_k^2}  + C R_k^N N_0(f_k).$$
%Now we choose $R_k$ as $R_k= \left(\frac{n_k T_k}{N_0(f_k)}\right)^{\frac{1}{N+2}}$ and obtain
%$$n_k\leq C (n_k N T_k)^{\frac{N}{N+2}} (N_0(f_k))^{\frac{2}{N+2}},$$
%which is equivalent to condition $(i.1)$. 

We deduce the estimates $(i.2)$ and $(i.3)$ from $(i.1)$. This is done in the same way as in the mono atomic case done in theorem 3.1.1 in \cite{Pirner3}.

The proof of $(i.2b)/(i.3b)$ is similar to the proof of $(ii.2)$ and $(ii.3)$ and is therefore omitted here.

% Furthermore, since we assumed that $f_1, f_2 \geq 0$,$\gamma\geq0$, $0\leq\alpha \leq1$, $\varepsilon\leq1$ and condition \eqref{gamma2} both the temperatures $T_1$ and $T_2$ and all coefficients in $T_{12}$ and $T_{21}$ are positive. All in all, with $(i.1)$ this leads to the estimates
%{\scriptsize
%\begin{align*}
%&\frac{n_1}{T_{12}^{N/2}}= \frac{n_1}{(\alpha T_1 +(1-\alpha)T_2 + \gamma |u_1-u_2|^2)^{N/2}} \leq \frac{n_1}{\alpha^{N/2} T_1^{N/2}} \leq C N_0(f_1), \\
%& \frac{n_2}{T_{21}^{N/2}}\\&= \frac{n_2}{\left(\varepsilon(1-\alpha) T_1 +(1-\varepsilon(1-\alpha))T_2 + \left[ \frac{1}{N} \varepsilon m_1 (1- \delta) \left( \frac{m_1}{m_2} \varepsilon ( \delta - 1) + \delta +1 \right) - \varepsilon \gamma \right] |u_1-u_2|^2 \right)^{N/2}}\\ & \leq \frac{n_2}{(1-\varepsilon(1-\alpha))^{N/2} T_2^{N/2}} \leq C N_0(f_2).
%\end{align*} }
\end{proof}
%\smartqed \qed
\begin{theorem}
For any pair of functions $(f_1, f_2, M_1, M_2)$ with $(1+|\xi_{l_k}|^2)f_k, (1+|\xi_{l_k}|^2)M_k \in L^1(d\xi_{l_k}),$ $ f_1,f_2,$ $ M_1, M_2 \geq 0$, we define the moments as in \eqref{moments}, \eqref{convexvel2}, \eqref{veloc2}, \eqref{contemp2} and \eqref{temp2}, then we have
\begin{enumerate}
\item[(ii.1)] $n_k (T_k +|u_k|^2 +|\bar{\eta}_{l_k}|^2)^{\frac{q-d-l_k}{2}} \leq C_q N_q(f_k)$ for $q>d+l_k+2$, $k=1,2$,\\
$n_k (T_k +|u_k|^2 +|\bar{\eta}_{l_k}|^2)^{\frac{q-d-l_k}{2}} \leq C_q N_q(M_k)$ for $q>d+l_k+2$, $k=1,2$,\\
$n_k (\Lambda_k +|u_k|^2 |^2)^{\frac{q-d}{2}} \leq C_q N_q(M_k)$ for $q>d+2$, $k=1,2$, \\
$n_k (\Theta_k +|\bar{\eta}_{l_k}|^2 )^{\frac{q-l_k}{2}} \leq C_q N_q(M_k)$ for $q>l_k+2$, $k=1,2$,
\item[(ii.2)] $n_1 (\Lambda_{12} +|u_{12}|^2)^{\frac{q-d}{2}} \leq C_q (N_q(M_1) + \frac{n_1}{n_2} N_q(M_2))$ for $q>d+2$,\\
$n_1 (\Theta_{12} +|\bar{\eta}_{l_1,12}|^2)^{\frac{q-d}{2}} \leq C_q (N_q(M_1) + \frac{n_1}{n_2} N_q(M_2))$ for $q>l_k+2$
\item[(ii.3)] $n_2 (\Lambda_{21} +|u_{21}|^2 )^{\frac{q-d}{2}} \leq C_q (\frac{n_2}{n_1} N_q(M_1) + N_q(M_2))$ for $q>d+2$, \\
$n_2 (\Theta_{21} +|\bar{\eta}_{l_2,21}|^2 )^{\frac{q-l_k}{2}} \leq C_q (\frac{n_2}{n_1} N_q(M_1) + N_q(M_2))$ for $q>l_k+2$.
\item[(ii.2b)/(ii.3b)] $n_1 (T_{12} +|u_{12}|^2 +|\bar{\eta}_{l_1,12}|^2)^{\frac{q-d}{2}} \leq C_q (N_q(M_1) + \frac{n_1}{n_2} N_q(M_2))$ \\
$n_2 (T_{21} +|u_{21}|^2 +|\bar{\eta}_{l_2,21}|^2 )^{\frac{q-l_k}{2}} \leq C_q (\frac{n_2}{n_1} N_q(M_1) + N_q(M_2))$ for $q>d+l_k+2$.
\end{enumerate}
\label{theoest2}
\end{theorem}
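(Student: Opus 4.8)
The proof splits into establishing the four baseline estimates in (ii.1), and then deducing (ii.2), (ii.3) and (ii.2b)/(ii.3b) from them by purely algebraic manipulations. For (ii.1) the plan is to mimic the argument for inequality $(2.3)$ in \cite{PerthamePulvirenti}, but with the scalar velocity variable replaced by the enlarged variable $\xi_{l_k}=(v,\eta_{l_k})\in\mathbb{R}^{d+l_k}$ and the bulk velocity replaced by $\bar\xi_{l_k}=(u_k,\bar\eta_{l_k})$.

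For the first line of (ii.1) I would begin from the centered second-moment identity that follows from \eqref{moments} together with \eqref{equ_temp}: since $d\,n_k T_k^t+l_k n_k T_k^r=(d+l_k)n_k T_k$, one has $\int m_k|\xi_{l_k}-\bar\xi_{l_k}|^2 f_k\,d\xi_{l_k}=(d+l_k)n_k T_k$, and hence, after expanding the square, $\int|\xi_{l_k}|^2 f_k\,d\xi_{l_k}\ge c\,n_k\big(T_k+|u_k|^2+|\bar\eta_{l_k}|^2\big)$ with a constant $c=c(d,l_k,m_k)>0$. I would then estimate the same second moment from above by splitting at a free radius $R>0$: on $\{|\xi_{l_k}|\le R\}$ the integrand is bounded by $R^2 f_k$, contributing at most $R^2 n_k$; on $\{|\xi_{l_k}|>R\}$ I would use the pointwise bound $f_k(\xi_{l_k})\le N_q(f_k)\,|\xi_{l_k}|^{-q}$ and integrate in $d+l_k$ dimensions, which produces a term of order $N_q(f_k)\,R^{\,d+l_k+2-q}$ and is finite precisely because $q>d+l_k+2$. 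Writing $\sigma^2:=T_k+|u_k|^2+|\bar\eta_{l_k}|^2$ and choosing $R$ proportional to $\sigma$ so that the near-field contribution $R^2 n_k$ absorbs at most half of the lower bound $c\,n_k\sigma^2$, the two estimates combine to $n_k\sigma^2\le C N_q(f_k)\,\sigma^{\,d+l_k+2-q}$, i.e. $n_k\sigma^{\,q-d-l_k}\le C_q N_q(f_k)$, which is exactly the claim. The second line is identical with $M_k$ in place of $f_k$, and the $\Lambda_k$- and $\Theta_k$-versions follow by running the same scheme on the $v$-marginal $\int M_k\,d\eta_{l_k}$ (dimension $d$, centered second moment $\tfrac{d}{m_k}n_k\Lambda_k$) and on the $\eta_{l_k}$-marginal (dimension $l_k$, centered second moment $\tfrac{l_k}{m_k}n_k\Theta_k$), the thresholds on $q$ again being dictated by convergence of the corresponding tail integral.

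The estimates (ii.2) and (ii.3) then follow from (ii.1) without any further integration, using only the algebraic form of the interspecies parameters. For instance, by \eqref{contemp2} and \eqref{convexvel2} the quantity $\Lambda_{12}+|u_{12}|^2$ is, thanks to the bound on $\gamma$ in \eqref{gamma2} and to $|u_1-u_2|^2\le 2|u_1|^2+2|u_2|^2$, dominated by $C\big[(\Lambda_1+|u_1|^2)+(\Lambda_2+|u_2|^2)\big]$; applying the elementary inequality $(a+b)^p\le C_p(a^p+b^p)$ with $p=(q-d)/2$, multiplying by $n_1$, and invoking the $\Lambda_k$-estimate of (ii.1) for each species gives $n_1(\Lambda_{12}+|u_{12}|^2)^{(q-d)/2}\le C_q\big(N_q(M_1)+n_1(\Lambda_2+|u_2|^2)^{(q-d)/2}\big)$, where the last term is rewritten as $\tfrac{n_1}{n_2}\,n_2(\Lambda_2+|u_2|^2)^{(q-d)/2}\le \tfrac{n_1}{n_2}C_q N_q(M_2)$. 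The $\Theta_{12}$-estimate is handled identically using the bound on $\tilde\gamma$ in \eqref{gamma2} and the definition of $\bar\eta_{l_1,12}$ in Definition \ref{defeta}, and (ii.3) is the symmetric statement, now using the explicit forms \eqref{veloc2} and \eqref{temp2} of $u_{21},\Lambda_{21},\Theta_{21}$ and the restrictions \eqref{gammapos2} to keep all coefficients bounded. Finally (ii.2b)/(ii.3b) are obtained by writing $T_{kj}$ through \eqref{equ_temp3} as a convex combination of $\Lambda_{kj}$ and $\Theta_{kj}$, so that $T_{kj}+|u_{kj}|^2+|\bar\eta_{l_k,kj}|^2$ is controlled by the sum of the two quantities already bounded in (ii.2)/(ii.3).

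The main obstacle is the baseline step (ii.1): one has to carry out the splitting-and-optimization argument of \cite{PerthamePulvirenti} in the enlarged phase space $\mathbb{R}^{d+l_k}$ and, for the $\Lambda_k$- and $\Theta_k$-versions, on the correct marginal, making sure the dimension counting and the resulting thresholds on $q$ come out as stated and that the radius $R$ can be chosen uniformly. By comparison the mixture estimates are routine; the only points requiring care there are the systematic use of the admissibility constraints \eqref{gamma2}--\eqref{gammapos2} to bound the convex-combination coefficients and the relative-velocity and relative-internal-energy cross terms, and the bookkeeping of the factors $n_1/n_2$ and $n_2/n_1$ so that each species' contribution is matched with the correct $N_q(M_k)$.
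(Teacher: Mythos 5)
Your overall route is the same as the paper's: the paper proves (ii.1) by repeating the splitting argument for inequality $(2.3)$ of \cite{PerthamePulvirenti} in the enlarged variable $\xi_{l_k}$, and then gets (ii.2), (ii.3) and (ii.2b)/(ii.3b) purely algebraically (the convex-combination structure of \eqref{convexvel2}, \eqref{contemp2}, \eqref{temp2}, \eqref{equ_temp3} together with the elementary inequality $(a+b)^p\le C_p(a^p+b^p)$, which is lemma 3.1.3 of \cite{Pirner3}, citing theorem 3.1.2 of \cite{Pirner3} for the mixture estimates). Your expansion of the first two estimates of (ii.1) (centered second moment $\int m_k|\xi_{l_k}-\bar\xi_{l_k}|^2f_k\,d\xi_{l_k}=(d+l_k)n_kT_k$, splitting at a radius $R\propto\sigma$, tail convergent for $q>d+l_k+2$) and of the mixture estimates (bounding $\gamma,\tilde\gamma$ by \eqref{gamma2}, the coefficients by \eqref{gammapos2}, and writing $n_1(\cdots)=\tfrac{n_1}{n_2}\,n_2(\cdots)$ to match $N_q(M_2)$) is a faithful and correct version of exactly that argument.

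There is, however, a genuine gap in your treatment of the third and fourth inequalities of (ii.1), the $(\Lambda_k,|u_k|^2)$ and $(\Theta_k,|\bar\eta_{l_k}|^2)$ versions. You propose to ``run the same scheme on the $v$-marginal'' and assert that the thresholds and exponents ``come out as stated''. They do not. The only pointwise control on the marginal $\int M_k\,d\eta_{l_k}$ available from $N_q(M_k)=\sup_{\xi_{l_k}}|\xi_{l_k}|^qM_k$ is obtained by integrating $M_k\le N_q(M_k)|\xi_{l_k}|^{-q}$ in $\eta_{l_k}$, which gives a decay of order $N_q(M_k)\,|v|^{l_k-q}$: one loses $l_k$ powers. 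Hence the far-field integral $\int_{|v|>R}|v|^2\bigl(\int M_k\,d\eta_{l_k}\bigr)dv$ converges only for $q>d+l_k+2$ (not $q>d+2$), and optimizing in $R$ yields the exponent $\tfrac{q-d-l_k}{2}$ (not $\tfrac{q-d}{2}$); the same loss occurs for the $\eta_{l_k}$-marginal. This is not a fixable detail of your method but a defect of the stated inequality itself: for the isotropic Maxwellian with $u_k=0$, $\bar\eta_{l_k}=0$, $\Lambda_k=\Theta_k=T$ one computes $N_q(M_k)=C_q\,n_k\,T^{(q-d-l_k)/2}$, so $n_k\Lambda_k^{(q-d)/2}\le C_qN_q(M_k)$ would force $T^{l_k/2}\le C_q$, which fails as $T\to\infty$. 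What your scheme (and the paper's one-line prescription ``use $M_k$ instead of $f_k$'', ``replace $v$ by $\eta_{l_k}$'', which hides the same dimension count) actually proves is the corrected version with exponent $\tfrac{q-d-l_k}{2}$ under $q>d+l_k+2$; this is the same scaling as the second inequality of (ii.1), feeds into your derivation of (ii.2)/(ii.3) in exactly the same way, and is what should be stated and used. You should prove that version explicitly rather than claim the stated exponents follow from the marginal argument.
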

\begin{proof}
The proof of $(ii.1)$ is analougous to the proof of the inequality $(2.3)$ in \cite{PerthamePulvirenti}. For the first inequality in $(ii.1)$ replace $v$ by $\xi_{l_k}$ and repeat all the steps done there. For the second inequality replace $v$ by $\xi_{l_k}$ and use $M_k$ instead of $f_k$.  For the third inequality use $M_k$ instead of $f_k$, and for the last inequality replace $v$ by $\eta_{l_k}$ and also $f_k$ by $M_k$. 

% We consider $n_k (N T_k +|u_k|^2) = \int |v|^2 f_k(v) dv$ for $k=1,2$. We split the  integration with respect to the velocity $v$ into $|v|>R_k$ and $|v|\leq R_k$ for some $R_k$ determined later. We obtain
%$$n_k(N T_k +|u_k|^2)\leq \int_{|v|>R_k} \frac{|v|^q}{|v|^{q-2}} f_k(v) dv + \int_{|v| \leq R_k} |v|^2 f_k(v) dv.$$ Since $q> N+2$, we can estimate the integral $\int_{|v|>R_k} \frac{1}{|v|^{q-2}} dv$ from above by $C_q R_k^{N-q+2}$. In the second integral we use that $|v|^2 \leq R_k^2$. Then we get
%$$n_k(N T_k +|u_k|^2)\leq C R_k^{N-q+2} N_q(f_k) + n_k R_k^2.$$
%Now we choose $R_k$ as $R_k= \left(\frac{n_k}{N_q(f_k)}\right)^{\frac{1}{N-q}}$ and obtain
%$$n_k(N T_k +|u_k|^2)\leq C (n_k)^{1-\frac{2}{q-N}} (N_q(f_k))^{\frac{2}{q-N}}.$$
%Since $f_k\geq 0$, we have $n_k(T_k +|u_k|^2) \leq n_k(N T_k + |u_k|^2)$ and we can deduce the required inequality $(ii.1)$.

The proof of $(ii.2)$ and $(ii.3)$ is analougous to the proof of the inequalities $(ii.2)$ and $(ii.3)$ in theorem 3.1.2 in \cite{Pirner3}. For the first estimates in $(ii.2)$ and $(ii.3)$, replace $f_k$ by $M_k$, for the second estimate replace in addition $v$ by $\eta_{l_k}$.

If ywe insert the definition of $T_{kj}$ into $(ii.2b)/(ii.3b)$, we can estimate the left-hand side of $(ii.2b)/(ii.3b)$ with lemma 3.1.3 in \cite{Pirner3} in terms of the right-hand sides of $(ii.2)$ and $(ii.3)$ and apply the inequalities $(ii.2)$ and $(ii.3)$.

\end{proof}
\begin{theorem}
For any pair of functions $(f_1, f_2, M_1, M_2)$ with $(1+|\xi|^2)f_k,(1+|\xi|^2)M_k \in L^1(d\xi),$ $ f_1,f_2,$ $ M_1, M_2 \geq 0$, we define the moments as in \eqref{moments}, \eqref{convexvel2}, \eqref{veloc2}, \eqref{contemp2} and \eqref{temp2}, then we have
\begin{enumerate}
\item[(iii.1)] $\frac{n_k |\bar{\xi}_{l_k}|^{d+l_k+q}}{[(T_k+|\bar{\xi}_k|^2) T_k]^{d+l_k/2}} \leq C_q N_q(f_k)$ for any $q>1, k=1,2$,\\ $\frac{n_k |\bar{\xi}_{l_k}|^{d+l_k+q}}{[(T_k+|\bar{\xi}_k|^2) T_k]^{d+l_k/2}} \leq C_q N_q(M_k)$ for any $q>1, k=1,2$,\\$\frac{n_k |u_k|^{d+q}}{[(\Lambda_k+|u_k|^2) \Lambda_k]^{d/2}} \leq C_q N_q(M_k)$ for any $q>1, k=1,2$, \\ $\frac{n_k |\bar{\eta}_{l_k}|^{l_k+q}}{[(\Lambda_k+|\bar{\eta}_{l_k}|^2) \Lambda_k]^{l_k/2}} \leq C_q N_q(M_k)$ for any $q>1, k=1,2$, 
\item[(iii.2)] $\frac{n_1 |u_{12}|^q}{\Lambda_{12}^{d/2}} \leq n_1 C ( \frac{|u_1|^q}{( \Lambda_1)^{d/2}} + \frac{|u_2|^q}{(\Lambda_2)^{d/2}})$ for any $q>1$,\\ $\frac{n_1 |\bar{\eta}_{l_1,12}|^q}{\Theta_{12}^{l_1/2}} \leq n_1 C ( \frac{|\bar{\eta}_{l_1}|^q}{( \Theta_1)^{l_1/2}} + \frac{|\bar{\eta}_{l_2}|^q}{(\Theta_2)^{l_1/2}})$ for any $q>1$,
\item[(iii.3)]$\frac{n_2 |u_{21}|^q}{\Lambda_{21}^{l_2/2}} \leq n_2 C ( \frac{|u_1|^q}{( \Lambda_1)^{l_1/2}} + \frac{|u_2|^q}{(\Lambda_2)^{l_2/2}})$ for any $q>1$, \\ $\frac{n_2 |\bar{\eta}_{l_2,21}|^q}{\Theta_{21}^{l_2/2}} \leq n_2 C ( \frac{|\bar{\eta}_{l_1}|^q}{( \Theta_1)^{l_2/2}} + \frac{|\bar{\eta}_{l_2}|^q}{(\Theta_2)^{l_2/2}})$ for any $q>1$.
\item[(iii.2b)/(iii.3b)]  $\frac{n_1 |\bar{\xi}_{l_1,12}|^q}{T_{12}^{(d+l_1)/2}} \leq n_1 C ( \frac{|\bar{\xi}_1|^q}{( T_1)^{(d+l_2)/2}} + \frac{|\bar{\xi}_2|^q}{(T_2)^{(d+l_2)/2}})$ for any $q>1$,
\end{enumerate}
%\textcolor{magenta}{check $n_1$ in (iii.3)}
\label{theoest3}
\end{theorem}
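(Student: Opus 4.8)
The plan is to reduce the combined estimate (iii.2b) to the two separate inequalities already proved in (iii.2), in exactly the spirit in which (ii.2b)/(ii.3b) was reduced to (ii.2)/(ii.3). The two structural facts I would start from are that the barred variable factorises as $\bar{\xi}_{l_1,12}=(u_{12},\bar{\eta}_{l_1,12})$, so that $|\bar{\xi}_{l_1,12}|^2=|u_{12}|^2+|\bar{\eta}_{l_1,12}|^2$, and that by its definition \eqref{equ_temp3} the mixture temperature $T_{12}=\frac{d\Lambda_{12}+l_1\Theta_{12}}{d+l_1}$ is a convex combination of $\Lambda_{12}$ and $\Theta_{12}$. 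First I would insert this definition of $T_{12}$ into the denominator of the left-hand side.

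Next I would split the numerator $|\bar{\xi}_{l_1,12}|^q=(|u_{12}|^2+|\bar{\eta}_{l_1,12}|^2)^{q/2}$ into a translational part $|u_{12}|^q$ and an internal-energy part $|\bar{\eta}_{l_1,12}|^q$ by means of the elementary inequality $(a+b)^{q/2}\le C_q(a^{q/2}+b^{q/2})$, which holds for $a,b\ge 0$ and $q>1$ with a purely dimensional constant. For the denominator I would use the convexity of $T_{12}$ together with lemma 3.1.3 in \cite{Pirner3}, whose function is precisely to compare a power of such a convex combination with products of powers of $\Lambda_{12}$ and $\Theta_{12}$; this is the mechanism used to relate the single factor $T_{12}^{(d+l_1)/2}$ in the denominator to the factors $\Lambda_{12}^{d/2}$ and $\Theta_{12}^{l_1/2}$ that appear in (iii.2). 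After this step the left-hand side of (iii.2b) is dominated by a constant times a sum built from the left-hand sides of the two inequalities in (iii.2). I would then apply those two inequalities and finally reassemble the resulting terms into the combined form on the right-hand side, using once more $\bar{\xi}_k=(u_k,\bar{\eta}_{l_k})$, the relation $T_k=\frac{d\Lambda_k+l_k\Theta_k}{d+l_k}$ and the splitting inequality in the reverse direction, absorbing all dimensional factors into $C$. The inequality (iii.3b) for species $2$ is obtained verbatim, with $T_{21},\Lambda_{21},\Theta_{21}$ and the inequalities (iii.3) replacing their $12$-counterparts.

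The step I expect to be the main obstacle is the reconciliation of the exponents. The left-hand side carries the single power $(d+l_1)/2$, whereas the estimates in (iii.2) supply the separate powers $d/2$ and $l_1/2$ over the two different temperatures $\Lambda_{12}$ and $\Theta_{12}$; comparing $T_{12}^{(d+l_1)/2}$ with these by convexity naturally produces a mixed denominator of product type $\Lambda_{12}^{d/2}\Theta_{12}^{l_1/2}$, and organising the splitting of $|\bar{\xi}_{l_1,12}|^q$ so that each piece is matched with the correct single temperature is the delicate point at which lemma 3.1.3 of \cite{Pirner3} must be invoked carefully. The hypothesis $q>1$ is precisely what guarantees the elementary splitting inequality with a dimension-only constant. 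The bookkeeping of which components of $\bar{\eta}_{l_1,12}$ are nonzero, in the sense of Definition \ref{defeta}, also has to be respected when reassembling the right-hand side, but it does not alter the structure of the argument.
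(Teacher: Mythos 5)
Your proposal does not prove the statement: of its four groups of estimates, (iii.1), (iii.2), (iii.3) and (iii.2b)/(iii.3b), you address only the last, and you do so by taking (iii.2) and (iii.3) as ``already proved''. Within this theorem nothing is already proved: (iii.2) and (iii.3) are claims of the very statement under consideration (they are not contained in Theorem \ref{theoest2}, whose inequalities involve positive powers of the temperatures and $N_q$ norms, not temperature denominators), and (iii.1) is never mentioned in your argument at all. The paper's proof spends essentially all of its effort on these parts: (iii.1) is obtained by repeating the proof of inequality $(2.3)$ of \cite{PerthamePulvirenti} with $v$ replaced by $\xi_{l_k}$ (respectively $\eta_{l_k}$) and $f_k$ replaced by $M_k$ where appropriate, and (iii.2), (iii.3) are obtained by repeating the proof of theorem 3.1.4 of \cite{Pirner3} with the corresponding substitutions. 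Without these, your reduction has no base to stand on.

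Moreover, even for (iii.2b)/(iii.3b) the reduction you propose does not close, and the analogy with (ii.2b)/(ii.3b) is misleading. That earlier reduction works because in (ii.2) and (ii.2b) the temperatures occur with \emph{positive} exponents and both inequalities have the identical right-hand side $C_q(N_q(M_1)+\frac{n_1}{n_2}N_q(M_2))$, so splitting $T_{12}$ via lemma 3.1.3 of \cite{Pirner3} immediately finishes; no conversion back is needed. In (iii.2b) the temperatures sit in the \emph{denominator} and the right-hand sides of (iii.2) and (iii.2b) are different macroscopic expressions. Concretely, bounding $T_{12}^{(d+l_1)/2}$ from below by $c\,\Lambda_{12}^{d/2}\Theta_{12}^{l_1/2}$ (the ``mixed denominator of product type'' you mention) and then applying (iii.2) leaves you with terms such as $|u_1|^q/(\Lambda_1^{d/2}\Theta_1^{l_1/2})$; to reassemble these into the claimed right-hand side you would need an inequality of the form $T_1^{(d+l_1)/2}\le C\,\Lambda_1^{d/2}\Theta_1^{l_1/2}$, which is the false direction of the arithmetic--geometric mean inequality (let $\Theta_1\to 0$ with $\Lambda_1$ fixed: the left side stays bounded away from zero while the right side vanishes). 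The alternative distribution of exponents, $\Lambda_{12}^{d/2}\le C\,T_{12}^{(d+l_1)/2}$, fails as well, already for scaling reasons. So the ``delicate point'' you defer to a careful use of lemma 3.1.3 is in fact fatal to this route: that lemma splits positive powers of sums and cannot reverse these comparisons. The paper instead proves (iii.2b)/(iii.3b) by the same direct argument as (iii.2)/(iii.3): split the numerator $|\bar{\xi}_{l_1,12}|^q\le C(|\bar{\xi}_1|^q+|\bar{\xi}_2|^q)$ using \eqref{convexvel2} and definition \ref{defeta}, and bound $T_{12}$ from below by $cT_1$ and by $cT_2$ directly from \eqref{equ_temp3}, \eqref{contemp2} and \eqref{temp2}, never passing through separate $\Lambda$ and $\Theta$ denominators.
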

\begin{proof}
The proof of $(iii.1)$ is analougous to the proof of the inequality $(2.3)$ in \cite{PerthamePulvirenti}. For the first inequality in $(iii.1)$ replace $v$ by $\xi_{l_k}$ and repeat all the steps done there. For the second inequality replace $v$ by $\xi_{l_k}$ and use $M_k$ instead of $f_k$. For the third inequality use $M_k$ instead of $f_k$, and for the last inequality replace $v$ by $\eta_{l_k}$ and also $f_k$ by $M_k$. 

%The proof of (iii.1) is exactly the same as the proof of the inequality (2.3) in \cite{PerthamePulvirenti}. For the convenience of the reader we want to repeat it here.
% For $q>1$, we get by H{\"o}lder's inequality from appendix \ref{secA.1} that
%\begin{align*}
%n_k |u_k| &\leq \int |v| f_k(v) dv \leq \int_{|u_k -v| \leq R_k} \frac{n_k}{n_k} |v| f_k(v) dv + \int_{|u_k-v|>R_k} |v| f_k(v) dv \\&\leq n_k \left( \int_{|u_k-v| \leq R_k} |v|^q \frac{f_k(v)}{n_k} dv \right)^{\frac{1}{q}} + \frac{1}{R_k} \int |u_k-v| |v| f_k(v) dv \\&\leq C n_k^{1-\frac{1}{q}} N_q(f_k)^{1/q} R_k^{N/q} + \frac{1}{R_k} \left(\int |v|^2 f_k(v) dv \right)^{1/2} \left( \int |u_k-v|^2 f_k(v) dv \right)^{1/2} \\&\leq C \left( n_k^{\frac{q-1}{q}} N_q(f_k)^{\frac{1}{q}} R_k^{N/q} + \frac{n_k}{R_k}\left(N T_k + |u_k|^2 \right)^{1/2} T_k^{1/2} \right).
%\end{align*}
%Now if we choose $R_k= \left( \frac{n_k^{1/q}(N T_k + |u_k|^2)^{1/2} T^{1/2}}{N_q(f_k)^{1/q}} \right)^{\frac{q}{N+q}}$, we obtain the inequality 
%$$ n_k |u_k| \leq C n_k^{\frac{1}{q} (q-1+ \frac{1}{N+q})} N_q(f_k)^{\frac{1}{q} (1- \frac{N}{N+q}) } ((NT +|u|^2)^{\frac{1}{2}} T^{\frac{1}{2}} )^{\frac{N}{N+q}}$$
%which is equivalent to the required estimate $(iii.1)$.\\
The proof of $(iii.2)$, $(iii.3)$ and $(iii.2b)/(iii.3b)$ is analougous to the proof of the inequalities $(iii.2)$ and $(iii.3)$ in theorem 3.1.4 in \cite{Pirner3}. % For the first estimates in $(ii.2)$ and $(ii.3)$, replace $f_k$ by $M_k$, for the second estimate replace in addition $v$ by $\eta_{l_k}$.
%Estimate $(iii.2)$ is a consequence of lemma \ref{Lem} using that $\gamma\geq0$, $0\leq\alpha \leq1$ and condition \eqref{gamma2}, since we have
%\begin{align*}
%\frac{n_1|u_{12}|^q}{T_{12}^{N/2}} = \frac{n_1 |\delta u_1 + (1- \delta) u_2|^q}{(\alpha T_1 + (1- \alpha) T_2 + \gamma |u_1-u_2|^2) ^{N/2}} \leq n_1 \frac{A(|u_1|^q +|u_2|^q)}{(\alpha T_1 +(1-\alpha) T_2)^{N/2}} \\ \leq n_1  \frac{ A|u_1|^q}{(\alpha T_1)^{N/2}} + n_1 \frac{A |u_2|^q}{((1-\alpha) T_2)^{N/2}}.
%\end{align*}
%
%The proof of $(iii.3)$ is similar to the proof of $(iii.2).$
\end{proof}
\begin{cons}
For any functions $(f_1, f_2, M_1, M_2)$ with $(1+|\xi|^2)f_k \in L^1(d\xi),$ $ f_1,f_2, M_1, M_2 \geq 0$, we define the moments as in \eqref{moments}, \eqref{convexvel2}, \eqref{veloc2}, \eqref{contemp2} and \eqref{temp2}, then we have
\begin{enumerate}
\item[(iv.1)] $\sup_{\xi_{l_k}} |\xi_{l_k}|^q \widetilde{M}_k[f_k] \leq C_q N_q(f_k)$ for $q>d+l_k+2$ or $q=0$, \\
$\sup_{\xi_{l_k}} |\xi_{l_k}|^q \widetilde{M}_k[f_k] \leq C_q N_q(M_k)$ for $q>d+l_k+2$ or $q=0$,
\item[(iv.2)] $\sup_{\xi_{l_k}} |\xi_{l_k}|^q M_{12}[M_1, M_2] \leq C_q ( N_q(M_1) + \frac{n_1}{n_2} N_q(M_2))$ for $q>d+l_k+2$ or $q=0$,
\item[(iv.3)] $\sup_{\xi_{l_k}} |\xi_{l_k}|^q M_{21}[M_1, M_2] \leq C_q ( \frac{n_2}{n_1} N_q(M_1) +  N_q(M_2))$ for $q>d+l_k+2$ or $q=0$.
\item[(iv.2b)/(iv.3b)] $\sup_{\xi_{l_k}} |\xi_{l_k}|^q \widetilde{M}_{12}[M_1, M_2] \leq C_q ( N_q(M_1) + \frac{n_1}{n_2} N_q(M_2))$ for $q>d+l_k+2$ or $q=0$, \\ $\sup_{\xi_{l_k}} |\xi_{l_k}|^q \widetilde{M}_{21}[M_1, M_2] \leq C_q ( \frac{n_2}{n_1} N_q(M_1) +  N_q(M_2))$ for $q>d+l_k+2$ or $q=0$.
\end{enumerate}
\label{conest}
Note that here and in the following we write $M_k[f_k], M_{12}[f_1,f_2]$ and $M_{21}[f_1,f_2]$ instead of $M_k, M_{12}$ and $M_{21}$ in order to emphasize the dependence of the Maxwell distributions on the distribution functions $f_1$ and $f_2$ via the macroscopic quantities as densities, velocities and temperatures.
\end{cons}
\begin{proof}
The proof of $(iv.1)$ is analougous as the proof of the inequality $(2.3)$ in \cite{PerthamePulvirenti} exchanging $v$ by $\xi$ and $M_k$ by $\widetilde{M}_k$ using the estimates $(i.1)$, $(ii.1)$ and $(iii.1)$ for $\widetilde{M}_k$.

% For the convenience of the reader we want to repeat it here.
%First, we consider the case $q>N+2$. According to lemma \ref{Lem}, we obtain
%\begin{align*}
% \sup_v |v|^q M_k[f_k] &\leq C \sup_v (|v-u_k|^q +|u_k|^q)M_k[f_k]\\& = C \sup_v ( |v-u_k|^q +|u_k|^q) \frac{n_k}{(2 \pi T_k)^{N/2}} e^{- \frac{|v-u_k|^2}{2 T_k/m_k}}.
% \end{align*}
%By computing derivatives of $|v-u_k|^q M_k[f_k]$ with respect to $v$, we see that the maximum of $|v-u_k|^q M_k[f_k]$ is given by $n_k T_k^{\frac{q-N}{2}}$. The computation is done in more details in the proof of estimate $(iv.2)$, so we will omit the details here. We obtain
%$$ \sup_v |v|^q M_k[f_k] \leq C( n_k T_k^{\frac{q-N}{2}} + n_k \frac{|u_k|^q}{T_k^{N/2}}).$$
%Define $E_k:= C( n_k T_k^{\frac{q-N}{2}} + n_k \frac{|u_k|^q}{T_k^{N/2}})$. Now we consider two cases, if $|u_k|>T_k^{1/2}$ and $|u_k|\leq T_k^{1/2}$. First $|u_k|>T_k^{1/2}$. In this case
%\begin{align*}
% E_k \leq C n_k \frac{|u_k|^q}{T_k^{N/2}}= C n_k\frac{|u_k|^{q+N}}{|u_k|^N T_k^{N/2}}&=C n_k\frac{|u_k|^{q+N}}{(\frac{1}{2}|u_k|^2 + \frac{1}{2} |u_k|^2)^{N/2} T_k^{N/2}}\\&\leq C n_k\frac{|u_k|^{q+N}}{(|u_k|^2 +  T_k)^{N/2} T_k^{N/2}}.
% \end{align*}
%  Finally the estimate (iii.1) leads to
%$$E_k\leq C_q N_q[f_k].$$ When $|u_k|\leq T_k^{1/2}$, we obtain with (ii.1)
%$$E_k \leq C n_k T_k^{\frac{q-N}{2}} \leq C_q N_q[f_k].$$
%So all in all, we obtain $$ \sup_v |v|^q M_k[f_k]\leq C_q N_q[f_k] \quad \text{for} \quad q>N+2.$$
%For $q=0$ we obtain with (i.1)
%$$\sup_v M_k[f_k] \leq n_k T_k^{-N/2} \leq C N_0(f_k).$$

The proof of $(iv.2)$, $(iv.3)$ and $(iv.2b)/(iv.3b)$ is very similar to the proof $(iv.2)$ and $(iv.3)$ in consequences 3.1.5 in \cite{Pirner3} exchanging $f_k$ by $M_k$ and in addition using lemma 3.1.3 in \cite{Pirner3} in the beginning in order to estimate $|\xi_{l_k}|^q =(|u_k|^2 + |\eta_{l_k}|^2)^{\frac{q}{2}}$ from above by $C(|u_k|^q + |\eta_{l_k}|^q)$.
\end{proof}
Now, we want to show existence and uniqueness of non-negative solutions in a certain function space using the previous estimates. We prove only the existence and uniqueness of model 1, since the proof of model 2 is analogous to the proof of model 2. This is, because the term $\nu_{kj} n_j (\widetilde{M}_{kj} - M_k)$ can be handled in the same way as the term $\frac{\nu_{kk} n_k}{Z_r^k} \frac{d+l_k}{d} (\widetilde{M}_k - M_k)$.
For the existence and uniqueness proof, we make the following assumptions:
\begin{ass}
\begin{enumerate}
\item We assume periodic boundary conditions. Equivalently we can construct solutions satisfying 
{\footnotesize
$$f_k(t,x_1,..., x_d, (\xi_{l_k})_1,...,(\xi_{l_k})_{d+l_k})= f_k(t,x_1,...,x_{i-1},x_i + a_i,x_{i+1},...x_d,(\xi_{l_k})_1,...(\xi_{l_k})_{d+l_k}),$$
$$M_k(t,x_1,..., x_d, (\xi_{l_k})_1,...,(\xi_{l_k})_{d+l_k})= M_k(t,x_1,...,x_{i-1},x_i + a_i,x_{i+1},...x_d,(\xi_{l_k})_1,...(\xi_{l_k})_{d+l_k}),$$}
for all $i=1,...,d+l_k$ and a suitable $\lbrace a_i\rbrace \in \mathbb{R}^d$ with positive components, for $k=1,2$.
\item We require that the initial values $f_k^0, M_k^0 k=1,2$ satisfy assumption $1$.
\item We are on the bounded domain in space $\Lambda_{poly}=\lbrace x \in \mathbb{R}^d | x_i \in (0,a_i)\rbrace$.
\item Suppose that $f_k^0$ satisfies $f_k^0 \geq 0$, $(1+|\xi_{l_k}|^2) f_k^0 \in L^1(\Lambda_{poly} \times \mathbb{R}^N)$ with \\$\int f_k^0 dx dv =1, k=1,2$.
\item Suppose $N_q(f_k^0):= \sup_{\xi_{l_k}} f_k^0(x,\xi_{l_k})(1+|\xi_{l_k}|^q) = \frac{1}{2} A_0 < \infty$ and $N_q(g_k^0):= \sup_{\xi_{l_k}} g_k^0(x,\xi_{l_k})(1+|\xi_{l_k}|^q) = \frac{1}{2} A_0 < \infty$ for some $q>d+l_k+2$.
\item Suppose $\gamma_k(x,t):= \int f_k^0(x-vt,v, \eta_{l_k}) dvd\eta_{l_k} \geq C_0 >0$ for all $t\in\mathbb{R}.$
\item Assume that the collision frequencies are written as in \eqref{asscoll} and are positive.
\item Assume that the initial data $\Theta_k^0$, $\Lambda_k^0$ satisfy condition \eqref{internal} and are integrable with respect to $x\in \Lambda_{poly}$
\item Assume that the relaxation parameter in front of $(\widetilde{M}-M)$ in \eqref{BGK} is in $L^{\infty}(dx)$ and non-negative.
%\item Assume that $(1-z_k) \geq c >0$ for some $c\in \mathbb{R}$.
\end{enumerate}
\label{ass2}
\end{ass}
With this assumptions we can show the following theorem.
\begin{theorem}
Under the assumptions \ref{ass2} and the definitions \eqref{moments}, \eqref{density2}, \eqref{convexvel2}, \eqref{veloc2}, definition \ref{defeta}, \eqref{contemp2} and \eqref{temp2}, there exists a unique non-negative mild solution $(f_1,f_2, M_1, M_2)\in C(\mathbb{R}^+ ; L^1((1+ |v|^2) dv dx))$ of the initial value problem \eqref{BGK} coupled with \eqref{kin_Temp} and \eqref{internal}. Moreover, for all $t>0$ the following bounds hold:
\begin{align*}
|u_k(t)|, |u_{12}(t)|, |u_{21}(t)|, |\eta_{l_k}|, |\bar{\eta}_{l_k}|, |\bar{\eta}_{l_k}|, T_k(t), T_{12}(t), T_{21}(t), N_q(f_k)(t) \leq A(t) &< \infty, \\
n_k(t) \geq C_0 e^{-(\widetilde{\nu}_{kk} + \widetilde{\nu}_{kj})t} &>0, \\
T_k(t), \Lambda_k(t), \Theta_k(t), \Lambda_{12}(t), \Theta_{12}(t), \Lambda_{21}(t), \Theta_{21}(t) \geq B(t)&>0,
\end{align*} 
for $k=1,2$ and some constants $A(t),B(t)$ given by
$$ A(t) = C e^{Ct}, ~ B(t) = C e^{-Ct}, ~ C>0$$
\label{ex2}
\end{theorem}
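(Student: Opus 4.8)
The strategy is the fixed-point iteration scheme of Perthame and Pulvirenti \cite{PerthamePulvirenti}, adapted to the coupled four-component vector $(f_1,f_2,M_1,M_2)$ in the spirit of \cite{Pirner3} but now with $M_1,M_2$ treated as genuine unknowns through the $g_k=M_k-f_k$ equation. I would work on a short interval $[0,S]$ in the set $\Omega_S$ of quadruples in $C([0,S];L^1((1+|v|^2)\,dv\,dx))$ that are non-negative and satisfy the uniform pointwise controls announced in the theorem: $N_q(f_k),N_q(M_k)\le A(t)$, the density lower bound, and the temperature lower bounds. The mild formulation of Definition \ref{milddef} defines a map $J$ which freezes the macroscopic coefficients $n_k,u_k,\Lambda_k,\Theta_k$ — and the interaction parameters built from them via \eqref{convexvel2}, \eqref{veloc2}, Definition \ref{defeta}, \eqref{contemp2} and \eqref{temp2} — inside the Maxwellians, and then integrates along characteristics. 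The whole proof reduces to showing that $J$ maps $\Omega_S$ into itself and is a contraction for $S$ small, followed by a patching argument.

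For invariance, non-negativity is immediate since the initial data and all exponential and Maxwellian factors in Definition \ref{milddef} are non-negative by Assumption \ref{ass2}. For the density lower bound I retain only the first term $e^{-\alpha_k}f_k^0(x-tv,\cdot)$; since $\alpha_k\le(\widetilde\nu_{kk}+\widetilde\nu_{kj})t$ by \eqref{asscoll}, integration in $(v,\eta_{l_k})$ together with $\gamma_k\ge C_0$ from Assumption \ref{ass2} yields $n_k(t)\ge C_0 e^{-(\widetilde\nu_{kk}+\widetilde\nu_{kj})t}$. For the upper bound I multiply the mild equations by $|\xi_{l_k}|^q$, take the supremum in $\xi_{l_k}$, and bound every Maxwellian on the right-hand side by $A(t)$ using the decay estimates of Consequence \ref{conest}, which in turn rest on Theorems \ref{theoest1}--\ref{theoest3}; a Gronwall argument then closes with $A(t)=Ce^{Ct}$. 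The lower bounds on the temperatures follow from Theorem \ref{theoest1}: since $n_k/\Lambda_k^{d/2}\le CN_0(M_k)\le CA(t)$ and likewise for $\Theta_k$ and the mixed temperatures, the density lower bound forces $\Lambda_k,\Theta_k\ge B(t)=Ce^{-Ct}$, and the positivity restrictions \eqref{gamma2} and \eqref{gammapos2} guarantee that the affine combinations defining $\Lambda_{12},\Theta_{12},\Lambda_{21},\Theta_{21}$ inherit a positive lower bound.

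Contraction and uniqueness follow by estimating $J(f)-J(\tilde f)$ in the weighted norm. On $\Omega_S$ the moments depend continuously on the distributions by the estimates above, and the maps $(n,u,\Lambda,\Theta)\mapsto M_k,M_{kj},\widetilde M_k$ are Lipschitz on the region where the temperatures stay bounded away from $0$ and $\infty$; choosing $S$ small then makes the time integral in Definition \ref{milddef} a strict contraction, and Banach's fixed-point theorem gives a unique solution on $[0,S]$. Because the bounds $A(S)$, $B(S)$ and $C_0 e^{-(\widetilde\nu_{kk}+\widetilde\nu_{kj})S}$ depend only on the data and not on the solution itself, I re-initialize at $t=S$ and extend the solution to arbitrary $t$, retaining the $C(\mathbb{R}^+;L^1((1+|v|^2)\,dv\,dx))$ regularity.

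The main obstacle is precisely the extra coupling created by carrying $M_k$ as an independent unknown rather than as a mere functional $M[f_k]$: since $\Theta_k$ is transported by the $g_k=M_k-f_k$ equation and $\Lambda_k$ is slaved to it through the algebraic constraint \eqref{internal}, the Lipschitz and positivity estimates must close simultaneously for the entire quadruple. In particular the interaction temperatures $\Lambda_{12},\Theta_{12},\Lambda_{21},\Theta_{21}$ in \eqref{temp2} are affine combinations of the single-species temperatures with coefficients fixed by \eqref{coll}, \eqref{gamma2} and \eqref{gammapos2}; verifying that these combinations remain uniformly positive, depend Lipschitz-continuously on $(M_1,M_2)$, and that \eqref{internal} never drives any $\Lambda_k$ negative along the iteration is the delicate bookkeeping on which the contraction ultimately rests. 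Everything else is a routine adaptation of the single-species and mono-atomic-mixture arguments of \cite{PerthamePulvirenti} and \cite{Pirner3}.
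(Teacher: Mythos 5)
Your overall architecture — a priori moment bounds via Theorems \ref{theoest1}--\ref{theoest3} and Consequence \ref{conest}, a Gronwall estimate on $N_q$, density and temperature lower bounds, Lipschitz continuity of the Maxwellians on a restricted set, and then a fixed-point argument on the mild formulation — is the same as the paper's Steps 1--5. The fixed-point mechanics differ in a harmless way: you use Banach's theorem on a short interval $[0,S]$ plus a continuation argument, while the paper (following \cite{Pirner3}) builds a sequence of solutions to \emph{linear} inhomogeneous transport equations with lagged coefficients (the ``$n-1|n-2$'' indexing) and shows it is Cauchy directly on the whole time interval, so no restarting and no smallness of $S$ is ever needed. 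That difference alone would not be worth more than a remark.

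There is, however, a genuine gap in your invariance step: the claim that non-negativity of the quadruple is ``immediate.'' It is immediate for $f_k$, whose mild equation has only non-negative gain terms ($M_k$, $M_{kj}$) and an exponential damping factor. It is \emph{not} immediate for $M_k$ in your scheme, precisely because you treat $M_k$ as a genuine unknown updated by integrating the $g_k$ equation along characteristics: the source $\widetilde{M}_k - M_k$ in Definition \ref{milddef} is signed, and equivalently the transport equation for $M_k = g_k + f_k$ obtained by adding \eqref{BGK} and \eqref{kin_Temp} contains the loss terms $-\nu_{kk} n_k f_k - \nu_{kj} n_j f_k$, which are proportional to $f_k$ rather than to $M_k$ and therefore cannot be absorbed into an integrating factor. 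So your map $J$ does not obviously preserve $M_k \geq 0$, and this is not a cosmetic point: every estimate you rely on --- (i.1)--(i.3), (ii.1)--(ii.3), (iii.1)--(iii.3), (iv.1)--(iv.3) --- is stated and proved under the hypothesis $M_1, M_2 \geq 0$, so without it the Gronwall/invariance argument that defines $\Omega_S$ collapses. The paper sidesteps this structurally: in its approximating sequence the object $M_k^{n-1|n-2}$ is never evolved as a free function; it is \emph{by definition} the Maxwellian \eqref{Maxwellian} evaluated at the lagged macroscopic fields ($\Theta_k^{n-2}$, all other moments at level $n-1$), while the $g_k$ transport equation together with the algebraic constraint \eqref{internal} only serves to update those fields. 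Positivity of each iterate $M_k^{n-1|n-2}$ is then equivalent to positivity of $n_k^{n-1}$, $\Lambda_k^{n-1}$, $\Theta_k^{n-2}$ --- exactly the lower bounds of Steps 2 and 3 --- and the limit inherits non-negativity as a limit of Maxwellians. To close your proof you must either adopt this parametrized-Maxwellian iteration (in which case your ``genuine unknown'' viewpoint disappears), or add a separate argument showing that the fixed point of $J$ satisfies $M_k \geq 0$; as written, neither is supplied.
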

\vspace{-1cm}
\begin{proof}
The idea of the proof is to find a Cauchy sequence of functions in a certain space which converges towards a solution to \eqref{BGK} coupled with \eqref{kin_Temp} and \eqref{internal}. The sequence will be constructed in a way such that each member of the sequence satisfies an inhomogeneous transport equation. In this case we know results of existence and uniqueness. In order to show that this sequence is a Cauchy sequence we need to show that the Maxwell distributions on the right-hand side of \eqref{BGK} and \eqref{kin_Temp} are Lipschitz continuous with respect to $f_1,f_2$ and $M_1,M_2$, respectively.\\ \\ The proof is structured as follows: First, we prove some estimates on the macroscopic quantities \eqref{moments}, \eqref{density2}, \eqref{convexvel2}, definition \ref{defeta}, \eqref{veloc2}, \eqref{contemp2} and \eqref{temp2}. From this we can deduce Lipschitz continuity of the Maxwell distributions $\widetilde{M}_k, M_{12}, M_{21}$ with respect to $M_1$ and $M_2$ which finally leads to the convergence of this Cauchy sequence to a solution to \eqref{BGK} coupled with \eqref{kin_Temp}. \\ \\ \textbf{ Step 1}: Gronwall estimate on $N_q(f_k(t))$ given by \eqref{Nq} \\ \\
If $f_1$ is part of a mild solution according to definition \ref{milddef}, we have
{\footnotesize
\begin{align*}
&N_q(f_1)= \sup_{\xi_{l_1}} |\xi_{l_1}|^q f_1 \leq  e^{-\alpha_1(x,v,t)} \sup_{\xi_{l_1}} |\xi_{l_1}|^q f_1^0(x-tv,v, \eta_{l_k}) \\ &+ \sup_{\xi_{l_1}} |\xi_{l_1}|^q [ e^{-\alpha_1(x,v,t)} \int_0^t [ \widetilde{\nu}_{11} \frac{n_1(x+(s-t)v,s)}{n_1(x+(s-t)v,s)+ n_2(x+(s-t)v,s)} M_1(x+(s-t)v,v,\eta_{l_k},s) \\ &+ \widetilde{\nu}_{12} \frac{n_2(x+(s-t)v,s)}{n_1(x+(s-t)v,s)+ n_2(x+(s-t)v,s)} M_{12}(x+(s-t)v,v,\eta_{l_k}, s)] e^{\alpha_1(x+(s-t)v,v,s)} ds ].
\end{align*}}
Since $\alpha_1$ is non-negative, we can estimate $e^{- \alpha_1(x,v,t)}$ in front of the initial data from above by $1$. Since we assumed that the collision frequencies have the shape given in \eqref{asscoll}, we can estimate the integrand in the exponential function \\$e^{- \alpha_1(x,v,t)} e^{\alpha_1(x+ (s-t)v,v,s)}$ by a constant and obtain
{\footnotesize
\begin{align*}
&N_q(f_1)= \sup_{\xi_{l_1}} |\xi_{l_1}|^q f_1 \leq  \sup_{\xi_{l_1}} |\xi_{l_1}|^q f_1^0(x-tv,v,\eta_{l_k}) \\ &+ \sup_{\xi_{l_1}} |\xi_{l_1}|^q [ \int_0^t e^{-C(t-s)} [ C \frac{n_1(x+(s-t)v,s)}{n_1(x+(s-t)v,s)+ n_2(x+(s-t)v,s)} M_1(x+(s-t)v,v,\eta_{l_k},s) \\ &+ C \frac{n_2(x+(s-t)v,s)}{n_1(x+(s-t)v,s)+ n_2(x+(s-t)v,s)} M_{12}(x+(s-t)v,v,\eta_{l_k},s)]  ds ].
\end{align*}}
Using assumption 5 (in the assumption \ref{ass2}) and the fact that  we can estimate $e^{-C(t-s)}$ from above by $1$ since $s$ is between $0$ and $t$, we get
\begin{align*}
&N_q(f_1)= \sup_{\xi_{l_1}} |\xi_{l_1}|^q f_1 \leq  \frac{1}{2} A_0 +   \int_0^t C \sup_x [  \frac{n_1(x,s)}{n_1(x,s)+ n_2(x,s)} \sup_{\xi_{l_1}} |\xi_{l_1}|^q M_1(x,v,\eta_{l_k},s) \\ &+  \frac{n_2(x,s)}{n_1(x,s)+ n_2(x,s)} \sup_{\xi_{l_1}} |\xi_{l_1}|^q M_{12}(x,v,\eta_{l_k},s)]  ds .
\end{align*}
With  $(iv.2),$ we obtain
{\footnotesize
\begin{align*}
N_q(f_1)&= \sup_{x,\xi_{l_1}} |\xi_{l_1}|^q f_1 \\ &\leq  \frac{1}{2} A_0 +   \int_0^t C_q \sup_x [\frac{n_1(x,s)+n_2(x,t)}{n_1(x,s)+ n_1(x,s)}  N_q(M_1)(s) +  \frac{n_1(x,s)}{n_1(x,s)+ n_2(x,s)} N_q(M_2(s))]  ds \\ &\leq \frac{1}{2} A_0 +   \int_0^t C_q [  \sup_x N_q(M_1)(s) +  \sup_x N_q(M_2)(s)]  ds.
\end{align*}}
Similarly, we can estimate $N_q(f_2)$ by
\begin{align*}
N_q(f_2)&= \sup_{\xi_{l_2}} |\xi_{l_2}|^q f_2 \leq  \frac{1}{2} A_0 +   \int_0^t C_q [ \sup_x N_q(M_1)(s) +  \sup_x N_q(M_2)(s)]  ds.
\end{align*}
We add both inequalities and obtain
\begin{align}
 N_q(f_1)+N_q(f_2) \leq A_0 + \int_0^t C_q [\sup_x N_q(M_1)(s)+ \sup_x N_q(M_2)(s)] ds.
 \label{Gron1}
 \end{align}

Now, if $g_1$ is part of a mild solution according to definition \ref{milddef}, we have
{\footnotesize
\begin{align*}
&N_q(g_1)= \sup_{\xi_{l_1}} |\xi_{l_1}|^q g_1 \leq   \sup_{\xi_{l_1}} |\xi_{l_1}|^q g_1^0(x-tv,v, \eta_{l_1}) \\ &+ \sup_{\xi_{l_1}} |\xi_{l_1}|^q [  \int_0^t [ \frac{\widetilde{\nu}}{z_r^1}_{11} \frac{n_1(x+(s-t)v,s)}{n_1(x+(s-t)v,s)+ n_2(x+(s-t)v,s)}\\& (\widetilde{M}_1(x+(s-t)v,v,\eta_{l_1},s)-M_1(x+(s-t)v,v,\eta_{l_1},s)) ]  ds ].
\end{align*}}
We estimate $-M_1$ by $M_1$ and  get
{\footnotesize
\begin{align*}
&N_q(g_1)= \sup_{\xi_{l_1}} |\xi_{l_1}|^q g_1 \leq  \sup_{\xi_{l_1}} |\xi_{l_1}|^q g_1^0(x-tv,v,\eta_{l_1}) \\ &+ \sup_{\xi_{l_1}} |\xi_{l_1}|^q [ \int_0^t e^{-C(t-s)} [ C \frac{n_1(x+(s-t)v,s)}{n_1(x+(s-t)v,s)+ n_2(x+(s-t)v,s)} \\&(\widetilde{M}_1(x+(s-t)v,v,\eta_{l_k},s)+M_1(x+(s-t)v,v,\eta_{l_k},s)) ]  ds ].
\end{align*}}
Using assumption 5 (in the assumption \ref{ass2}), we get
{\small
\begin{align*}
&N_q(g_1)= \sup_{\xi_{l_1}} |\xi_{l_1}|^q g_1 \leq  \frac{1}{2} A_0 \\&+   \int_0^t C \sup_x [  \frac{n_1(x,s)}{n_1(x,s)+ n_2(x,s)} \left(\sup_{\xi_{l_1}} |\xi_{l_1}|^q \widetilde{M}_1(x,v,\eta_{l_k},s)+\sup_{\xi_{l_1}} |\xi_{l_1}|^q M_1(x,v,\eta_{l_k},s)\right) ]  ds .
\end{align*}}
With  $(iv.1),$ we obtain
{\footnotesize
\begin{align*}
N_q(g_1)= \sup_{x,\xi_{l_k}} |\xi_{l_k}|^q g_1 \leq  \frac{1}{2} A_0 +   \int_0^t C_q \left( \sup_x [  N_q(f_1)(s)+\sup_x [  N_q(M_1)(s)]\right)   ds.
\end{align*}}
Similarly, we can estimate $N_q(g_2)$ by
\begin{align*}
N_q(g_2)&= \sup_{\xi_{l_2}} |\xi_{l_2}|^q g_2 \leq  \frac{1}{2} A_0 +   \int_0^t C_q \left( \sup_x N_q(f_2)(s)+\sup_x N_q(M_2)(s)\right)  ds.
\end{align*}
We add both inequalities and obtain
\begin{align}
\begin{split}
 &N_q(g_1)+N_q(g_2) \leq A_0 \\&+ \int_0^t C_q [\sup_x N_q(f_1)(s)+ \sup_x N_q(f_2)(s)+\sup_x N_q(M_1)(s)+ \sup_x N_q(M_2)(s)] ds.
 \end{split}
 \label{Gron2}
 \end{align}
 Now, we add the inequalities \eqref{Gron1} and \eqref{Gron2} and obtain
 \begin{align*}
 N_q(f_1)+N_q(f_2) + N_q(g_1) + N_q(g_2) \leq 2 A_0 \\+ \int_0^t C_q [ \sup_x N_q(f_1)(s) + \sup_x N_q(f_2)(s) + 2\sup_x N_q(M_1)(s) +2 \sup_x N_q(M_2)(s) ] ds
 \end{align*}
 According to the definition of $g_k$, we have $M_k= g_k+f_k$ and therefore we get
  \begin{align*}
 N_q(f_1)+N_q(f_2) + N_q(M_1) + N_q(M_2) \leq 2 A_0\\+ \int_0^t 4 C_q [ \sup_x N_q(f_1)(s) + \sup_x N_q(f_2)(s) + \sup_x N_q(M_1)(s) + \sup_x N_q(M_2)(s) ] ds
 \end{align*}
With Gronwall's lemma, we obtain
\begin{align}
N_q(f_1)+N_q(f_2) + N_q(M_1) + N_q(M_2) \leq 2 A_0  e^{4 C_q t} 
\label{G}
\end{align}
for $q>d+l_1+l_2+2$ or $q=0.$
\\ \\ \textbf{ Step 2}: Estimate on the densities \\ \\
The proof is analougous to the proof in the mono atomic case given in \cite{Pirner3}.
\\  \\
\textbf{ Step 3}: Estimate on the temperatures \\ \\
The estimate on the temperatures $T_k,\Lambda_k,\Theta_k$ from below can be proven analougously as in step 3 in \cite{Pirner3} now using the extended estimates in $(i.1),$ \eqref{G} and the estimate on the density from the previous step stated in this theorem. 

The proof of the estimates on $\Lambda_{12}, \Lambda_{21}, \Theta_{12}, \Theta_{21}$ are also analogeous as in step 3 in \cite{Pirner3} in the mono atomic case using $(i.2)$,$(i.3)$, \eqref{G} and the estimate on the density from step 2. 
%Now, we estimate the temperatures from below. First, we consider $T_k^{N/2}$. We can estimate it from below using $(i.1)$
%$$T_k^{N/2}(t) \geq \frac{C n_k(t)}{N_0(f_k(t))}. $$ Using \eqref{G} and \eqref{D}, we obtain
%$$ T_k^{N/2}(t) \geq \frac{C e^{-(\tilde{\nu}_{kk} + \tilde{\nu}_{kj})t} C_0}{A e^{C_qt}}=:B(t)>0.$$ We obtain the same estimate for $T_{12}^{N/2}$ using $(i.2),$ \eqref{G} and \eqref{D}, and for $T_{21}^{N/2}$ using $(i.3),$ \eqref{G} and \eqref{D}. 
\\ \\
\textbf{ Step 4}: Estimates on the velocities and temperatures from above\\ \\
The estimates on $\Lambda_k, \Theta_k, T_k$, $\Lambda_{12}, \Theta_{12}$, $\Lambda_{21}, \Theta_{21}$, $|u_k|, |\bar{\eta}_{l_k}|$, $|u_{12}|, |u_{21}|, |\bar{\eta}_{l_1,12}|,$ $ |\bar{\eta}_{l_2,21}|$ can be estimated in an analougeous way as in step for in \cite{Pirner3} now using the estimates $(ii.1), (ii.2)$ and $(ii.3)$, \eqref{G} and the estimate on the densities from step 2.
%We estimate
%$T_k+ |u_k|^2,$ $T_{12}+ |u_{12}|^2,$ and $T_{21}+|u_{21}|^2$  first using $(ii.1), (ii.2)$ and $(ii.3)$, respectively  and then using \eqref{G} and \eqref{D}. For example
%$$T_{12}+ |u_{12}|^2\leq \frac{C_q(N_q(f_1) + \frac{n_1}{n_2}N_q(f_2))^{\frac{2}{q-N}}}{n_1^{2/(q-N)}} \leq \frac{C_q A e^{C_q \frac{2}{q-N}t}}{e^{-C\frac{2}{q-N} t} C_0^{\frac{2}{q-N}}} < A(t)< \infty.$$
\\ \\ \textbf{ Step 5}: Lipschitz continuity \\ \\
The next step of the proof is to show Lipschitz continuity of the operators $(M_k,M_j)$ $ \mapsto \widetilde{\nu}_{kk} \frac{n_k}{n_k+n_j} M_k + \widetilde{\nu}_{kj} \frac{n_j}{n_k+n_j} M_{kj}[M_k,M_j]$, $M_k \mapsto \frac{\widetilde{\nu}_{kk}}{z_r^k} \frac{n_k}{n_k+n_j} (\widetilde{M}_{k}[M_k]-M_k)$, when \\$(f_1,f_2,\Lambda_1, \Lambda_2,\Theta_1,\Theta_2)$ are restricted to
{\footnotesize
\begin{align}
\begin{split}
\Omega=\lbrace f_1,f_2\in L^1(\Lambda_{poly} \times \mathbb{R}^N; (1+|v|^2) dv dx), \Lambda_1,\Lambda_2,\Theta_1, \Theta_2 \in L^1(dx) |\\ f_k \geq 0, N_q(f_k)<A, \min (n_k,T_k,\Lambda_k, \Theta_k)>C,k=1,2 \rbrace.
\end{split}
\label{Omega}
\end{align}
}
The proof for $(M_k,M_j) \mapsto \widetilde{\nu}_{kk} \frac{n_k}{n_k+n_j} M_k + \widetilde{\nu}_{kj} \frac{n_j}{n_k+n_j} M_{kj}[M_k,M_j]$ is analogous to the mono atomic case done in \cite{Pirner3}  replacing $(f_k,f_j)$ by $(M_k,M_j)$.

It remains to prove the Lipschitz continuity of $\widetilde{M}_k$ with respect to $M_k$. The proof is analogous  to the proof for $f_k \mapsto M_k[f_k]$ in the mono atomic case given in \cite{PerthamePulvirenti} using the whole internal energy $|v|^2+ |\eta_{l_k}|^2$ instead of only $|v|^2$.
\\ \\ \textbf{ Step 6}: Existence and Uniqueness of non-negative solutions in $\bar{\Omega}$ (see definition of $\Omega$ in \eqref{Omega})\\ \\
Now, introduce the sequence  $\lbrace(f_1^n,f_2^n, \Lambda_1^n, \Lambda_2^n, \Theta_1^n, \Theta_2^n)\rbrace$ of mild solutions to
\begin{align*} 
\begin{split} 
\partial_t f_k^n + v\cdot\nabla_x  f_k^n   &= \widetilde{\nu}_{kk} \frac{n_k^{n-1}}{n_k^{n-1}+n_j^{n-1}} (M_k^{n-1|n-2} - f_k^n) \\ &+ \widetilde{\nu}_{kj} \frac{n_j^{n-1}}{n_k^{n-1}+n_j^{n-1}} (M_{kj}^{n-1|n-2}- f_k^n),
\\ 
\partial_t g_k^{n-1|n-2} + v\cdot\nabla_x   g_k^{n-1|n-2}   &= \frac{\widetilde{\nu}_{kk}}{z_r^k} \frac{n_k^{n-2}}{n_k^{n-2}+n_j^{n-2}} (\widetilde{M}_k^{n-2} - g_k^{n-2|n-3}- f_k^{n-2}) \\ \frac{d}{2} \Lambda_k^{n-1} + \frac{l_k}{2} \Theta_k^{n-2} &= \frac{d}{2} T_k^{trans,n-2} + \frac{l_k}{2} T_k^{rot,n-2} \\ f_1^0&= f_1(t=0), \\ f_2^0&=f_2(t=0) \\ \Theta_k^0&= \Theta_k(0), \quad \quad \quad  n \geq 3
\end{split}
\end{align*}
for $k,j=1,2, k\neq j$. The meaning of the notation $n-1|n-2$ is the following. In distribution functions with this index we take the value of $\Theta_k^{n-2}$ but all the other macroscopic quantities have the index $n-1$. Since the zeroth functions are known as the initial values, these are inhomogeneous transport equations for fixed $n\in \mathbb{N}$. For an inhomogeneous transport equation we know the existence of a unique mild solution in the periodic setting
%{\scriptsize
%\begin{align*}
%\begin{split}
%&f_1^n(x,v,t)= e^{-\alpha_1^{n-1}(x,v,t)} f_1^0(x-tv,v) \\ &+ e^{-\alpha_1^{n-1}(x,v,t)} \int_0^t [ \tilde{\nu}_{11} \frac{n_1^{n-1}(x+(s-t)v,s)}{n_1^{n-1}(x+(s-t)v,s)+ n_2^{n-1}(x+(s-t)v,s)} M_1^{n-1}(x+(s-t)v,v,s) \\ &+ \tilde{\nu}_{12} \frac{n_2^{n-1}(x+(s-t)v,s)}{n_1^{n-1}(x+(s-t)v,s)+ n_2^{n-1}(x+(s-t)v,s)} M_{12}^{n-1}(x+(s-t)v,v,s)] e^{\alpha_1^{n-1}(x+(s-t)v,v,s)} ds,
%\end{split}
%\\
%\begin{split}
%&f_2^n(x,v,t)= e^{-\alpha_2^{n-1}(x,v,t)} f_2^0(x-tv,v) \\ &+ e^{-\alpha_2^{n-1}(x,v,t)} \int_0^t [ \tilde{\nu}_{22} \frac{n_2^{n-1}(x+(s-t)v,s)}{n_1^{n-1}(x+(s-t)v,s)+ n_2^{n-1}(x+(s-t)v,s)} M_2^{n-1}(x+(s-t)v,v,s) \\ &+ \tilde{\nu}_{21} \frac{n_1^{n-1}(x+(s-t)v,s)}{n_1^{n-1}(x+(s-t)v,s)+ n_2^{n-1}(x+(s-t)v,s)} M_{21}^{n-1}(x+(s-t)v,v,s)] e^{\alpha_2^{n-1}(x+(s-t)v,v,s)} ds.
%\end{split}
%\end{align*}}

Now, we show that $\lbrace(f_1^n,f_2^n, \Lambda_1^n, \Lambda_2^n, \Theta_1^n, \Theta_2^n)\rbrace$ is a Cauchy sequence in $\Omega$. Then, since $\bar{\Omega}$ is complete, we can conclude convergence in $\bar{\Omega}$. First, we show that $\lbrace(f_1^n,f_2^n, \Lambda_1^n, \Lambda_2^n, \Theta_1^n, \Theta_2^n)\rbrace$ is in $\Omega$.
\begin{itemize}
\item $f_1^n,f_2^n$ are in $L^1((1+|v|^2)dv dx)$ since $f_1^0,f_2^0$ are in $L^1((1+|v|^2)dv dx)$.
\item $\Theta_1^n, \Theta_2^n$ are in $L^1(dx)$ since $\Theta_1^0, \Theta_2^0$ are in $L^1(dx)$.
\item $f_1^n,f_2^n\geq 0$ since $f_1^0,f_2^0\geq 0$.
\item $N_q(f_k^n) < A$, $\min(n_k^n,T_k^n, \Lambda_k^n, \Theta_k^n)>C$, since all estimates in step $1,2$ and $4$ are independent of $n$.
\end{itemize}
Now, $\lbrace(f^n_1,f^n_2)\rbrace$ is a Cauchy sequence in $\Omega$ since we have
{\scriptsize
\begin{align*}
&||f_1^n-f_1^{n-1}||_{L^1((1+|\xi|^2)d\xi_{l_1} dx)}\\ &\leq \int_{\Lambda_{poly}} \int_{\mathbb{R}^d} e^{-\alpha_1^{n-1}(x,v,t)} \int_0^t e^{\alpha_1^{n-1}(x+(s-t)v,v,s)} \big|\widetilde{\nu}_{11}^{n-1} \frac{n_1^{n-1}(x+(s-t)v,s)}{n_1^{n-1}(x+(s-t)v,s)+ n_2^{n-1}(x+(s-t)v,s)}\\ &M_1^{n-1|n-2}(x+(s-t)v,v,\eta_{l_1},s) - \widetilde{\nu}_{11}^{n-2} \frac{n_1^{n-2}(x+(s-t)v,s)}{n_1^{n-2}(x+(s-t)v,s)+ n_2^{n-2}(x+(s-t)v,s)} \\&M_1^{n-2|n-3}(x+(s-t)v,v,\eta_{l_1},s)\big|ds (1+|\xi_{l_1}|^2) dx d\xi_{l_1} \\&+ \int_{\Lambda_{poly}} \int_{\mathbb{R}^d} e^{-\alpha_1^{n-1}(x,v,t)} \int_0^t e^{\alpha_1^{n-1}(x+(s-t)v,v,\eta_{l_1},s)} \big|\widetilde{\nu}_{12}^{n-1} \frac{n_2^{n-1}(x+(s-t)v,s)}{n_1^{n-1}(x+(s-t)v,s)+ n_2^{n-1}(x+(s-t)v,s)}\\ &M_{12}^{n-1|n-2}(x+(s-t)v,v,\eta_{l_1}, s) - \widetilde{\nu}_{12}^{n-2} \frac{n_2^{n-2}(x+(s-t)v,s)}{n_1^{n-2}(x+(s-t)v,s)+ n_2^{n-2}(x+(s-t)v,s)} \\ &M_{12}^{n-2|n-3}(x+(s-t)v,v,\eta_{l_1},s)\big|ds (1+|\xi_{l_1}|^2) dx d\xi_{l_1}.
\end{align*}}
Now we use the Lipschitz continuity of the Maxwell distributions
{\scriptsize
\begin{align*}
&||f_1^n-f_1^{n-1}||_{L^1((1+|\xi_{l_1}|^2)d\xi_{l_1} dx)}\\ &\leq C \int_{\Lambda_{poly}} \int_{\mathbb{R}^d} e^{-\alpha_1^{n-1}(x,v,t)} \int_0^t e^{\alpha_1^{n-1}(x+(s-t)v,v,s)} | M_1^{n-1|n-2}(x+(s-t)v,v, \eta_{l_1},s) \\&- M_1^{n-2|n-3}(x+(s-t)v,v,\eta_{l_1},s)|ds (1+|\xi_{l_1}|^2) dx d\xi_{l_1} \\&+ \int_{\Lambda_{poly}} \int_{\mathbb{R}^d} e^{-\alpha_1^{n-1}(x,v,t)} \int_0^t e^{\alpha_1^{n-1}(x+(s-t)v,v,s)} [ | M_{1}^{n-1|n-2}(x+(s-t)v,v,\eta_{l_1},s) \\&- M_{1}^{n-2|n-3}(x+(s-t)v,v,\eta_{l_1},s)|\\&+| M_{2}^{n-1|n-2}(x+(s-t)v,v,\eta_{l_2},s) - M_{2}^{n-2|n-3}(x+(s-t)v,v,\eta_{l_2},s)|] ds (1+|\xi_{l_2}|^2) dx d\xi_{l_2}
\\ &\leq   e^{-Ct} \int_0^t e^{Cs} [ || M_{1}^{n-1|n-2}(s) - M_{1}^{n-2|n-3}(s)||_{L^1((1+|\xi_{l_1}|^2)d\xi_{l_1} dx}\\&+|| M_{2}^{n-1|n-2}(s) - M_{2}^{n-2|n-3}(s)||_{L^1((1+|\xi_{l_2}|^2)d\xi_{l_2} dx}] ds.
\end{align*}}
Similarly, we get for species $2$
{\footnotesize
\begin{align*}
&||f_2^n-f_2^{n-1}||_{L^1((1+|\xi_{l_2}|^2)d\xi_{l_2} dx)} \\ &\leq   e^{-Ct} \int_0^t e^{Cs} [ || M_{1}^{n-1|n-2}(s) - M_{1}^{n-2|n-3}(s)||_{L^1((1+|\xi_{l_2}|^2)d\xi_{l_2} dx)}\\&+|| M_{2}^{n-1|n-2}(s) - M_{2}^{n-2|n-3}(s)||_{L^1((1+|\xi_{l_2}|^2)d\xi_{l_2} dx)}] ds.
\end{align*}}
Now, we use the definition $M_k = g_k + f_k$ and replace $M_k^{n-1|n-2}$ and $M_k^{n-2|n-3}$ by $g_k^{n-1|n-2}+ f_k^{n-1}$ and $g_k^{n-2|n-3} + f_k^{n-2}$, respectively. By triangle inequality, we obtain
\begin{align*}
&||f_2^n-f_2^{n-1}||_{L^1((1+|\xi_{l_2}|^2)d\xi_{l_2} dx)} \\&\leq   e^{-Ct} \int_0^t e^{Cs} [ || g_{1}^{n-1|n-2}(s) - g_{1}^{n-2|n-3}(s)||_{L^1((1+|\xi_{l_2}|^2)d\xi_{l_2} dx)}\\&+|| g_{2}^{n-1|n-2}(s) - g_{2}^{n-2|n-3}(s)||_{L^1((1+|\xi_{l_2}|^2)d\xi_{l_2} dx)}\\&+ || f_{1}^{n-1}(s) - f_{1}^{n-2}(s)||_{L^1((1+|\xi_{l_2}|^2)d\xi_{l_2} dx)}\\&+|| f_{2}^{n-1}(s) - f_{2}^{n-2}(s)||_{L^1((1+|\xi_{l_2}|^2)d\xi_{l_2} dx)}] ds.
\end{align*}
Now, we insert the mild formulation for $g_k$ in order to replace the terms $$|| g_{k}^{n-1|n-2}(s) - g_{k}^{n-2|n-3}(s)||_{L^1((1+|\xi_{l_k}|^2)d\xi_{l_k} dx)}.$$ If we do this, we will obtains additional terms with $|| M_{k}^{n-2|n-3}(s) - g_{k}^{n-3|n-4}(s)||_{L^1((1+|\xi_{l_k}|^2)d\xi_{l_k} dx)}$ and a term with $||\widetilde{M}_k^{n-2}- \widetilde{M}_k^{n-3}||_{L^1((1+|\xi_{l_k}|^2)d\xi_{l_k} dx)}$. On the last term we apply the Lipschitz continuity of $\widetilde{M}_k$ with respect to $M_k$. This gives again terms with $f_k$ and $g_k$. Doing this inductively one can prove in an analougous way as in Step 6 in \cite{Pirner3}, that $\lbrace f_k^n \rbrace$ and $\lbrace g_k^n \rbrace$ are Cauchy sequences in $\bar{\Omega}$. Additionally, one can conclude existence and uniqueness in the same way as in Step 6 of \cite{Pirner3} for the mono atomic case.

\end{proof}
%\begin{remark}
%$M^{(1)}$ and $M^{(2)}$ in the model of Andries, Aoki and Perthame in \cite{AndriesAokiPerthame2002} and section \ref{sec3.2.2} have the same structure as $M_{12}$ and $M_{21}$, respectively, meaning that the velocities $u^{(1)}$ and $u^{(2)}$ and the temperatures $T^{(1)}$ and $T^{(2)}$ of $M^{(1)}$ and $M^{(2)}$, respectively  have the same structure as the velocities $u_{12}$ and $u_{21}$ and the temperatures $T_{12}$ and $T_{21}$. So the proof of theorem \ref{ex2} for the model in section \ref{sec3.2.2}  goes through analogously as for the model in section \ref{sec3.2.1}.
%\end{remark}
\subsection{Positivity of solutions}
\begin{theorem}
Let $(f_1,f_2, M_1, M_2)$ be a mild solution to \eqref{BGK} coupled with \eqref{kin_Temp} and \eqref{internal} (or \eqref{BGK} coupled with \eqref{kin_Temp3} and \eqref{internal}) under the modified assumptions for existence and uniqueness described in the previous section with positive initial data. Then the solution is positive meaning $f_1, f_2, M_1, M_2 >0$. 
\end{theorem}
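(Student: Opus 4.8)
The plan is to read positivity directly off the mild formulation in Definition \ref{milddef} (respectively Definition \ref{milddef2}), exploiting that every ingredient on the right-hand side is manifestly nonnegative once the macroscopic parameters are known to be strictly positive. The crucial input is Theorem \ref{ex2}, which along the constructed solution already guarantees the strictly positive lower bounds $n_k(t) \geq C_0 e^{-(\widetilde{\nu}_{kk}+\widetilde{\nu}_{kj})t} > 0$ for the densities and $T_k, \Lambda_k, \Theta_k, \Lambda_{12}, \Theta_{12}, \Lambda_{21}, \Theta_{21} \geq B(t) = Ce^{-Ct} > 0$ for every finite $t$. The positivity of the mixed temperatures $\Lambda_{12}, \Theta_{12}, \Lambda_{21}, \Theta_{21}$ is precisely what the restrictions \eqref{gamma2}--\eqref{gammapos2} on $\delta, \beta, \gamma, \tilde{\gamma}$ were designed to ensure, so no additional work is needed to secure them.

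First I would establish positivity of all Maxwell distributions. Each of $M_k$, $M_{kj}$, $\widetilde{M}_k$ (and, in model 2, $\widetilde{M}_{kj}$) has the Gaussian form \eqref{BGKmix}, \eqref{Maxwellian}: a density prefactor divided by a positive power of a temperature, multiplied by the exponential of a finite real number. Since the relevant density equals $n_k > 0$ (using \eqref{density2}) and the relevant temperatures are strictly positive by Theorem \ref{ex2}, each prefactor is finite and positive and each exponent is finite, so every such Maxwellian is strictly positive for all $(x,v,\eta_{l_k},t)$. In particular this already settles positivity of $M_1, M_2$ directly from their explicit representation \eqref{Maxwellian}; I emphasize that one must argue this way rather than through the identity $M_k = g_k + f_k$, since $g_k$ carries no sign information, nor through the evolution equations \eqref{kin_Temp}, \eqref{kin_Temp3}, which do not preserve positivity term by term.

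Next I would turn to $f_k$ via the mild formula of Definition \ref{milddef}. The weight $\alpha_k$ is a finite nonnegative integral of positive collision frequencies (finite because the density ratios lie in $[0,1]$ and the $\widetilde{\nu}_{jk}$ are constants, so $\alpha_k \leq (\widetilde{\nu}_{kk}+\widetilde{\nu}_{kj})t$), whence $e^{-\alpha_k(x,v,t)}$ and $e^{\alpha_k(x+(s-t)v,v,s)}$ are strictly positive. The leading term $e^{-\alpha_k} f_k^0(x-tv,v,\eta_{l_k})$ is then strictly positive because $f_k^0 > 0$ by hypothesis. In the integral term, each summand is a product of a positive constant $\widetilde{\nu}_{kk}$ or $\widetilde{\nu}_{kj}$, a nonnegative density ratio, a strictly positive Maxwellian $M_k$ or $M_{kj}$ from the previous step, and the positive exponential weight; hence the whole integrand is nonnegative and its time integral is nonnegative. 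Adding the strictly positive initial term yields $f_k(x,v,\eta_{l_k},t) > 0$ for all $t$, for both species. For model 2 the only change is that the $M_k$-equation of Definition \ref{milddef2} additionally contains the strictly positive Gaussian $\widetilde{M}_{kj}$, so the argument for $f_k$ is unchanged and positivity of $M_k$ again follows from its explicit Gaussian form.

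I expect no genuine obstacle here: positivity is essentially inherited from the construction, since the nontrivial lower bounds on densities and temperatures have already been produced in Theorem \ref{ex2}. The one point demanding care is logical rather than computational, namely that $M_k > 0$ must be obtained from the Gaussian representation \eqref{Maxwellian} together with the strictly positive bounds of Theorem \ref{ex2}, and not from any sign property of $g_k$ or from the relaxation equations. Once that is observed, the mild representation reduces the positivity of $f_k$ to a one-line consequence of the positivity of $f_k^0$ and of the Maxwellians.
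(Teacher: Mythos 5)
Your proposal is correct and follows essentially the same route as the paper, whose proof is simply a citation of section 4 of \cite{Pirner3}: there, too, positivity is read off the mild (Duhamel) formulation, using that the Maxwellians on the right-hand side are strictly positive Gaussians once the lower bounds on the densities and temperatures from the existence theorem (here Theorem \ref{ex2}) are available. Your explicit treatment of $M_1,M_2$ via the Gaussian representation \eqref{Maxwellian} (rather than via $M_k=g_k+f_k$, which indeed carries no sign information) is exactly the detail needed to extend that cited argument to the extra unknowns $M_1,M_2$ present in the polyatomic models.
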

\begin{proof}
The proof is exactly the same as in the case of the BGK model for mixtures, see section 4 in \cite{Pirner3}.
\end{proof}

%For acknowledgements section, please don't number the section, please begin it with \section*{Acknowledgements}
\section*{Acknowledgments} The author thanks Christian Klingenberg for suggesting this topic and encouraging me to write it up. Thanks go to him and Gabriella Puppo for many discussions on polyatomic modelling and multi-species kinetics.

% You may incorporate your references as follows in your main tex file.
% Using BibTex is not recommended but can be handled.

%%-----------------------------
%%      your bibliography
%%-----------------------------
\end{document}